\documentclass{amsart}

\usepackage{amsmath, amssymb}
\usepackage{mathrsfs}   
\usepackage{geometry,graphicx}
\usepackage{url}
\usepackage{caption}
\usepackage{enumerate}

\usepackage{amsthm}
\usepackage[colorlinks=true,linkcolor=blue!50!black,citecolor=blue!50!black,urlcolor=blue!50!black]{hyperref}
\theoremstyle{plain}
\newtheorem{theorem}{Theorem}[section]
\newtheorem{conjecture}[theorem]{Conjecture}
\newtheorem{lemma}[theorem]{Lemma}
\newtheorem{proposition}[theorem]{Proposition}
\newtheorem{corollary}[theorem]{Corollary}

\newtheorem{question}[theorem]{Question}

\theoremstyle{definition}

\theoremstyle{remark}
\newtheorem{remark}[theorem]{Remark}

\usepackage{mathtools}

\newcommand{\R}{\mathbb{R}}

\newcommand{\V}{\mathcal{V}}
\newcommand{\E}{\mathcal{E}}
\newcommand{\conv}{\operatorname{conv}}

\newcommand{\cart}{\mathbin{\square}}
\newcommand{\eps}{\varepsilon}

\DeclareMathOperator{\ints}{int} 
\DeclareMathOperator{\aff}{aff}

\newcommand{\card}[1]{\ensuremath{\# #1}}

\DeclareMathOperator{\Kl}{Kl}
\DeclareMathOperator{\cf}{circ}

\DeclarePairedDelimiter\set{\{}{\}}
\DeclarePairedDelimiter\paren{(}{)}

\usepackage{xcolor}

\usepackage{setspace}
\title[Connectivity, toughness, and hamiltonicity of simplicial
polytopes]
{Connectivity, toughness, and hamiltonicity of simplicial
polytopes}
\author{Guillermo Pineda-Villavicencio}
\address{School of Information Technology, Deakin University, Australia}
\email{\texttt{guillermo.pineda@deakin.edu.au}}
\date{\today}

\begin{document}

\begin{abstract}
For every $d\ge4$, every prescribed connectivity $r\ge d$, and every
$\eps>0$, we construct a simplicial $d$-polytope whose graph has
connectivity exactly $r$, is nontraceable, and has toughness less than
$\eps$. Thus the classical theorems of Whitney (1931) and Tutte (1956), which
guarantee hamiltonicity under sufficiently high connectivity in dimension
three, have no higher-dimensional counterpart. This answers a question posed by Bojan Mohar in 2001 on whether sufficiently high
connectivity guarantees hamiltonicity of $4$-polytopal graphs and the
connectivity questions raised by Ikegami, Maezawa, and Zamfirescu in 2021. 

For fixed $d\ge4$, $r\ge d$, and integer $k\ge2$, the examples can
be chosen to have no closed spanning walk with at most $k$ visits
per vertex, no spanning tree with degrees at most $k$, and
nontraceable prisms.

 For fixed $d$ and $r$, we obtain graphs
on $n$ vertices whose longest paths and cycles contain $O(\sqrt n)$
vertices. Consequently, the family of  simplicial
$d$-polytopes with connectivity exactly $r$ has Gr\"unbaum--Walther shortness coefficient
zero and shortness exponent at most $1/2$.

\end{abstract}

\maketitle

\section{Introduction}
\label{sec:intro}
Throughout, a \emph{$d$-polytope} is a $d$-dimensional convex hull of
finitely many points in $\R^d$, and $G(P)$ denotes the graph (the
$1$-skeleton) of a polytope $P$. A polytope is \emph{simplicial} if all
its facets are simplices. We write $\V(P)$ and $\E(P)$ for the vertex
and edge sets of $P$, and use the same notation for graphs.

For a graph $G=(V,E)$ and $X\subseteq V$, let $G[X]$ denote
the subgraph induced by $X$, and write $G-X:=G[V\setminus X]$.
We prefer $G-x$ to $G-\set{x}$.
A vertex set $Z$ \emph{separates} $G$ if $G-Z$ has at least
two components. Such a set is a \emph{separator}.
A graph $G$ is \emph{$r$-connected} if it has more than $r$
vertices and $G-X$ is connected for every $X\subseteq\V(G)$
with $\card X\le r-1$. Its \emph{connectivity} is the greatest
integer $r$ for which it is $r$-connected.

A \emph{hamiltonian path} in a graph is a path that visits every vertex exactly once. A  graph with a hamiltonian path is \emph{traceable}; otherwise it is \emph{nontraceable}. A \emph{hamiltonian cycle} is a cycle that visits every vertex exactly once. A graph with a hamiltonian cycle is \emph{hamiltonian}; otherwise it is \emph{nonhamiltonian}. For polytopes, these notions apply to their graphs.

Hamiltonian paths and cycles have been widely studied in graph theory. On graphs of polytopes, the focus has mostly been on 3-connected planar graphs, since graphs of 3-polytopes are exactly the planar 3-connected graphs (Steinitz’s theorem \cite{Ste22}).  As examples of such studies we have the celebrated theorems of Whitney \cite{Whi31} and  Tutte \cite{Tut56}.

\begin{theorem}[Whitney  1931, \cite{Whi31}]
    Every 4-connected simplicial 3-polytope is hamiltonian.
    \label{thm:4connected-simplicial-planar-whitney}
\end{theorem}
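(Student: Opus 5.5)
By Steinitz's theorem, the graph $G$ of a simplicial $3$-polytope is a maximal planar graph, i.e.\ a triangulation of the sphere. I would therefore prove the equivalent statement: every $4$-connected planar triangulation is hamiltonian. The key translation is that in a triangulation, $4$-connectivity is the same as having no \emph{separating triangle}. A minimal separator of a triangulation induces a cycle, so a $3$-separator is a triangle that does not bound a face. In Whitney's language, the hypothesis thus says that every $3$-cycle bounds a face. I plan to follow Whitney's original inductive strategy, in the form later streamlined by Chv\'atal and by Tutte's bridge method.

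The core is a stronger, inductive lemma about \emph{near-triangulations}: $2$-connected plane graphs whose outer face is a cycle $C$ and whose inner faces are all triangles. The lemma asserts the following. Suppose the near-triangulation has no separating triangles, meaning no triangle whose interior contains a vertex, and no chord of $C$. Then for any two distinct vertices $a,b$ on $C$ there is a hamiltonian path from $a$ to $b$. Under suitable hypotheses one may also prescribe that a given edge of $C$ is used. Whitney's exact formulation is: if $C=a\ldots b\ldots$ and $ab$ is an outer edge, then there is a hamiltonian $a$--$b$ path, i.e.\ a hamiltonian cycle through $ab$.

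The theorem follows by applying the lemma to $G$ minus the interior of one face. Removing the edge $ab$ of a face $abc$ is not quite enough, so instead I view $G$ as a near-triangulation with outer cycle the triangle $abc$. The lemma then gives a hamiltonian $a$--$b$ path, which closes up with the edge $ab$ into a hamiltonian cycle. At this step I must allow the outer cycle to be a triangle, with "no separating triangles" meaning none other than the outer face.

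I will prove the lemma by induction on the number of vertices, splitting the near-triangulation along a chord or along a path through an inner vertex. The steps are as follows.

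\begin{enumerate}[(i)]
\item If $C$ has a chord $xy$, I split along it into two smaller near-triangulations. I then concatenate hamiltonian paths chosen so that $a$ and $b$ end up as endpoints, using the prescribed-endpoint/edge flexibility in each part.
\item If there is no chord, I take the outer neighbour $c$ of $a$ along $C$ (not $b$) and delete $a$. The neighbours of $a$ form a path from one outer neighbour to the other, and this path becomes part of the new outer boundary. The new graph may have chords, namely edges between vertices of this fan path and other boundary vertices. These chords decompose it into blocks, each a smaller near-triangulation.
\item I apply induction to each block with appropriately chosen endpoints and chain the resulting paths, finally attaching $a$.
\end{enumerate}

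The main obstacle is step (ii)--(iii). Hypotheses that are too weak fail the induction, because the chain of blocks demands paths between specific boundary vertices and sometimes through specific boundary edges. This is exactly why Whitney and Tutte strengthened the statement, Tutte by allowing the outer cycle to carry ``$C$-bridges'' with at most three attachments. I would adopt Thomassen's version of Tutte's lemma: for a $2$-connected plane graph with outer cycle $C$, vertices $x\in C$, $y$ arbitrary, and an edge $\alpha$ of $C$, there is an $x$--$y$ path $P$ through $\alpha$. Every $P$-bridge of this path has at most three attachments, and those containing an edge of $C$ have at most two. In a $4$-connected triangulation every bridge with at most three attachments must be trivial. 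Any nontrivial bridge would give a separator of size at most $3$, or a separating triangle, since attachments of a bridge in a triangulation span a separating set. So $P$ is hamiltonian, and taking $y$ adjacent to $x$ closes it into a cycle. If allowed, I would simply invoke Tutte's theorem via this bridge lemma, since it subsumes the statement. Otherwise the verification of the bridge-attachment invariant through the chord/block decomposition is the technical heart.
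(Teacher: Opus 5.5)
The paper gives no proof of this statement. It quotes it from Whitney's 1931 paper, and it is also the simplicial special case of Tutte's Theorem~\ref{thm:4connected-planar-tutte}. Taken on its own terms, your proposal offers two routes, and neither is complete as written.

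The Whitney-style induction is only an outline. Your lemma assumes $C$ is chordless and prescribes only the two ends. But step~(i) deals with chords, and step~(ii) produces pieces that must be chained through shared boundary vertices, which needs paths with further prescribed properties. You acknowledge that the stated hypotheses do not survive the induction. However, you never formulate or check an invariant that does, so this part proves nothing yet.

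The fallback through Thomassen's theorem is the right tool, but your deduction fails for the data you describe. A nontrivial $P$-bridge with at most three attachments gives a separator of size at most three only if some vertex of $P$ is \emph{not} one of its attachments. Take $C$ to be a facial triangle $abc$, $x=a$, $y=b$ (adjacent, as you propose) and $\alpha=bc$. In any $4$-connected triangulation, the path $P=acb$ satisfies every conclusion of the theorem:
\begin{enumerate}[(a)]
\item the chord $ab$ is a bridge with two attachments;
\item $G-\{a,b,c\}$ is connected by $4$-connectivity, and together with its edges to $P$ it forms the only nontrivial bridge;
\item that bridge has attachments exactly $a,b,c$, because each has degree at least $4$, and it contains no edge of $C$.
\end{enumerate}
Yet $P$ misses every inner vertex, and with $\alpha=ab$ the path is forced to be $P=ab$. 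So your claim that every bridge with at most three attachments must be trivial is false for this choice.

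The missing step is to choose the data so that $\card{\V(P)}\ge4$. For example, let $y$ be an inner neighbour of $a$, which exists since $\deg a\ge4$, and let $\alpha=bc$. Then $a,b,c,y$ are distinct vertices of $P$. Hence every nontrivial bridge yields a separator of size at most three, contradicting $4$-connectivity. Thus $P$ is a hamiltonian $a$--$y$ path with at least four vertices, and adding the edge $ay$ closes it into a hamiltonian cycle.

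With this repair the argument is correct. It still rests on Thomassen's theorem, which already implies Tutte's theorem, so it is no more self-contained than a direct appeal to Theorem~\ref{thm:4connected-planar-tutte}.
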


\begin{theorem}[Tutte 1956, \cite{Tut56}]
    Every 4-connected 3-polytope is hamiltonian.
    \label{thm:4connected-planar-tutte}
\end{theorem}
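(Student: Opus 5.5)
The plan is to follow Tutte's bridge method, in Thomassen's strengthened form. The key lemma is the following. Let $G$ be a 2-connected plane graph, let $C$ be a facial cycle, and let $x\in\V(C)$, $e\in\E(C)$ and $y\in\V(G)$. Then $G$ has a cycle $T$ through $x$, $e$ and $y$, called a \emph{Tutte cycle}, such that every $T$-bridge has at most three attachments on $T$, and every $T$-bridge containing an edge of $C$ has at most two. For a 4-connected graph, any bridge with at most three attachments would have those attachments as a separator of size at most $3$, unless the bridge is a single chord. So every bridge is trivial and $T$ is hamiltonian. By Steinitz's theorem, the graph of a 3-polytope is 3-connected and planar, so for a 4-connected 3-polytope this yields Theorem~\ref{thm:4connected-planar-tutte}.

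I would prove the stronger path version by induction on $\card{\V(G)}$. The statement is: for a 2-connected plane graph with outer cycle $C$, vertices $u\ne v$ on $C$, and an edge $e$ on $C$, there is a $u$--$v$ path $P$ through $e$ such that
\begin{enumerate}[(i)]
\item every $P$-bridge has at most three attachments, and
\item every $P$-bridge containing an edge of $C$ has at most two attachments.
\end{enumerate}

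The induction step runs as follows.
\begin{enumerate}[(1)]
\item If $G$ has a 2-cut $\{a,b\}$ splitting off a part that avoids $u$, $v$ and $e$, replace that part by the edge $ab$. Apply induction to the smaller graph. If the resulting path uses $ab$, reinsert a path through the removed part obtained by induction on it, with $a,b$ as endpoints. If it does not use $ab$, the removed part lies inside a single bridge with attachments among $\{a,b\}$.
\item Otherwise, decompose $C$ into the $u$--$v$ arc $A$ containing $e$ and the complementary arc $B$.
\item Consider the blocks of $G-\V(B)$, or of a suitable modification near the endpoints. They form a chain from $u$ to $v$ along $A$. In each block, apply induction with the appropriate endpoints, namely the cut vertices of the chain, and the appropriate edge.
\item Concatenate the resulting paths. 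Each bridge of the global path is either inside a block, where induction controls its attachments, or contains vertices of $B$. The latter kind attaches to $P$ only at cut vertices and at boundary vertices, since planarity confines it to one face region.
\end{enumerate}

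The main obstacle is the bookkeeping in step (4). One must verify that bridges meeting $C$ get at most two attachments, while bridges meeting $B$ stay at three. This is where the hypothesis that $e$ lies on the outer face, together with condition (ii) in the inductive hypotheses of the blocks, is essential. I would handle it by choosing, in each block, the prescribed edge to be the edge of the block's outer cycle adjacent to $B$. The two-attachment condition then prevents a bridge touching $B$ from acquiring a third attachment inside the block.

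Finally, apply the lemma with $u,v$ the endpoints of an edge $e'$ of a face, and close the path with $e'$. Every bridge of the resulting cycle has at most three attachments, and 4-connectivity forces all bridges to be single chords. This yields a hamiltonian cycle, giving Theorem~\ref{thm:4connected-planar-tutte}. Theorem~\ref{thm:4connected-simplicial-planar-whitney} is the special case of triangulations.
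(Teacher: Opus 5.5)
The paper gives no proof of this statement; it quotes Tutte (1956) as background. Your proposal therefore has to stand on its own, and as written it has two genuine gaps.

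\textbf{The inductive step (3)--(4) cannot produce paths satisfying (ii).} Suppose $G$ is 3-connected and $P$ has at least three vertices. Then a $P$-bridge that contains an edge of $C$ and is not a single chord has at most two attachments, and $P$ has a vertex outside them. So those attachments form a separator of size at most $2$, which is impossible. Hence (ii) forces every vertex of $C$, in particular every vertex of $B$, to lie on $P$.

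You assemble $P$ inside the blocks of $G-\V(B)$, so $P$ never enters $B$. The interior of $B$ is connected along $B$, so it lies in a single $P$-bridge that contains edges of $C$. Condition (ii) would then allow only the attachments $u,v$, making $\{u,v\}$ a 2-cut.

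Concretely, take the wheel with hub $h$ and rim $1,2,3,4,5$, and let $u=1$, $v=3$, $e=12$. The only path satisfying (i) and (ii) is $1,2,h,5,4,3$, which runs through the interior of $B=3,4,5,1$.

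Your remark that bridges meeting $B$ ``stay at three'' already contradicts (ii), since $B\subseteq C$. Choosing each block's prescribed edge next to $B$ cannot help when $P$ never enters $B$. Forcing $P$ along the boundary while keeping control of the attachments is the real content of the Tutte--Thomassen induction, and your sketch does not supply it.

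\textbf{The closing deduction also fails as stated.} A nontrivial bridge with at most three attachments yields a separator only if the cycle has a vertex outside those attachments. Take the octahedron drawn with outer face $C=123$ and inner triangle $456$, and let $e'=12$, $u=1$, $v=2$, $e=23$. The path $1,3,2$ satisfies (i) and (ii): the inner triangle forms one bridge with attachments $\{1,2,3\}$ and contains no edge of $C$. Closing the path with $e'$ gives $T=C$, which is not hamiltonian. Your cycle version fails in the same way when $y\in\V(C)$.

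A fix compatible with your path lemma is to apply it to $G-e'$:
\begin{enumerate}
\item Its outer cycle $C'$ has at least four vertices, because in a 3-connected plane graph two faces sharing an edge meet only in that edge.
\item The vertices $u,v$ are nonadjacent in $G-e'$, so $P$ has at least three vertices.
\item By the argument above, 3-connectivity of $G-e'$ and (ii) give $\V(C')\subseteq\V(P)$.
\item Hence $\card{\V(T)}\ge4$, and your 4-connectivity argument goes through.
\end{enumerate}
Alternatively, use Thomassen's version of the lemma with $y\notin\V(C)$.
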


A question of Mohar~\cite{Moh01} asks for a $4$-dimensional analogue
of Tutte's theorem~\ref{thm:4connected-planar-tutte}.

\begin{question}[Mohar 2001]\label{q:mohar}
Is there a constant $c$ such that every $c$-connected $4$-polytope is
hamiltonian?
\end{question}

Ikegami, Maezawa, and Zamfirescu~\cite[Sec.~3, Item~2]{IkeMaeZam21}
recalled Mohar's question and asked whether there are nonhamiltonian
$5$-connected $4$-polytopes. They also observed that the connectivity
question makes sense in higher dimensions. Accordingly, we consider
the following counterparts of Whitney's and Tutte's theorems in each
fixed dimension at least four.

\begin{question}[Whitney--Tutte type problem for polytopal graphs]\label{q:main}
For every $d\ge 4$, do there exist constants $c_1$ and $c_2$ such that
\begin{enumerate}[\rm (i)]
\item every $c_1$-connected simplicial $d$-polytope is hamiltonian;
\item every $c_2$-connected $d$-polytope is hamiltonian?
\end{enumerate}
\end{question}
We answer both parts of Question~\ref{q:main} in the negative, and
hence also answer Mohar's question. In contrast, the answer to the
$5$-connected existence question of Ikegami, Maezawa, and Zamfirescu
is affirmative. Our examples are simplicial and nontraceable, and
allow the connectivity to be prescribed exactly while the toughness
tends to zero.

We denote by $c(G)$ the number of connected components of a graph $G$.  Following Chv\'atal~\cite{Chv73}, the \emph{toughness} of a noncomplete graph \(G\) is
\[
 \tau(G):=
 \min\set*{
   \frac{\card S}{c(G-S)}:
   S\subseteq\V(G),\ c(G-S)>1}.
\]
The toughness of a complete graph is defined to be infinite. For $t>0$, a graph
is \emph{\(t\)-tough} if \(\tau(G)\geq t\), or equivalently if
\[
 c(G-S)\leq \frac{\card S}{t}
\]
for every separator \(S \) of \(G\). Hamiltonian graphs must be 1-tough (Lemma~\ref{lem:k-walk-tree-toughness}). 

Tutte's theorem~\ref{thm:4connected-planar-tutte} immediately implies that a nonhamiltonian 3-polytope $P$ on at least five vertices contains a separator with three vertices. Thus $\tau(G(P))\le 3/2$. Harant~\cite{Har93} constructed nonhamiltonian  simple 3-polytopes attaining this bound.

Our main result is the following.

\begin{theorem}
\label{thm:main}

Let $d\ge4$, $r\ge d$, and $A\ge3$ be integers, and put
$N=A(r-d+1)$. There is a simplicial $d$-polytope $P$ 
whose vertex set is partitioned as $S\cup T$, with
\[
 \card S=2N+d-4
 \qquad\text{and}\qquad
 \card T=A\bigl((d-3)N-2(d-4)\bigr),
\]
such that
\begin{enumerate}[\rm (i)]
\item $G(P)$ has connectivity exactly $r$; 
\item $T$ is an independent set of $G(P)$ whose vertices all  have
degree $r$ in $G(P)$; and  
\item 
$\tau\paren*{G(P)}\le \frac{\card S}{\card T}\le\frac{2}{A}$.
\end{enumerate}
In particular, $G(P)$ is nontraceable, and hence nonhamiltonian. 
\end{theorem}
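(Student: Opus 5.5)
The plan is a ``stacking beyond patches'' construction. First we build an auxiliary simplicial $d$-polytope $Q$ with $\V(Q)=S$ and $\card S=2N+d-4$. Then we obtain $P$ by placing one new point beyond each of $\card T$ prescribed patches on the boundary of $Q$. Write $m:=r-d+1$, so that $N=Am$.

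\textbf{Building $Q$.} For $Q$ we would take two ``rows'' $x_1,\dots,x_N$ and $y_1,\dots,y_N$ together with $d-4$ apex vertices $z_1,\dots,z_{d-4}$. The intended combinatorics is an iterated pyramid/join-type structure over a $3$-dimensional prism-like strip on the $x$'s and $y$'s, realised simplicially, for instance by a cyclic-polytope-style moment-curve choice followed by pulling. The strip is cut into $A$ blocks of $m$ consecutive indices. The count $(d-3)N-2(d-4)$ per block suggests indexing the patches as follows. Each patch is a triangulated $(d-1)$-ball $B$ in $\partial Q$ whose vertex set consists of one block-window of $m$ consecutive vertices together with a fixed set of $d-1$ further vertices (neighbouring $x$'s, $y$'s and the apices). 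This gives exactly $m+(d-1)=r$ vertices. Within each block the number of admissible windows is $(d-3)N-2(d-4)$. We will prove that these $\card T$ patches are pairwise interior-disjoint and that each is the facet set of a ``visible region'' that can be isolated. Concretely, for each $B$ there is a point $t_B$ lying beyond exactly the facets of $B$ and beneath every other facet of $Q$ (and of the other new points).

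\textbf{Passing to $P$.} By the beyond/beneath lemma, $P=\conv(S\cup\set{t_B})$ is simplicial. Each $t_B$ is adjacent exactly to the $r$ vertices of $B$, and no two $t_B$ are adjacent. This proves (ii).

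\textbf{Toughness and nontraceability.} Deleting $S$ leaves $\card T$ isolated vertices, so
\[
\tau\paren*{G(P)}\le \frac{\card S}{\card T}\le\frac{2N+d-4}{A((d-3)N-2(d-4))}\le\frac{2}{A}.
\]
The last inequality is a routine check using $d\ge4$ and $N\ge A$. Since $A\ge3$, we have $\card T\ge \card S+2$. A hamiltonian path alternates at most $\card S+1$ vertices of the independent set $T$, so $G(P)$ is nontraceable. This gives (iii).

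\textbf{Connectivity, the main obstacle.} The upper bound in (i) is immediate, since $\deg t_B=r$ and $N(t_B)$ separates $t_B$ from the rest. For the lower bound it suffices to show that $G(Q)$ is $r$-connected and that every $t_B$ has $r$ neighbours in $S$: adding a vertex with $r$ neighbours to an $r$-connected graph preserves $r$-connectivity. Balinski's theorem only gives $d$ for $Q$, so I would choose the combinatorics of $Q$ to be $r$-neighbourly-like along the strip. Concretely, each $x_i$ should be adjacent to all $x_j,y_j$ with $|i-j|\le m$, and to all apices. We would then verify $r$-connectivity via Menger: between any two vertices of $S$ we exhibit $r$ internally disjoint paths using the ``band'' structure, in the same way one proves that powers of paths and cycles are highly connected. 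Reconciling this high connectivity of $Q$ with the requirement that the patches have exactly $r$ vertices is the delicate point, and it is what I expect to require the bulk of the work.
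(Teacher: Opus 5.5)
Your derivation of the toughness bound and of nontraceability from the counts and the independence of $T$ is correct and matches the paper. However, the proposal never actually produces $P$, and the steps it leaves open are the content of the theorem.

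\textbf{Gaps in the proposal.} The polytope $Q$ is described only by its ``intended combinatorics''; no simplicial $d$-polytope with the two rows, the apices and the band adjacencies is exhibited. The existence of a point $t_B$ beyond exactly the facets of a prescribed patch $B$ is asserted, not proved, and it is not automatic. The sets of facets visible from a point form a restricted, realization-dependent class. What is always available is a point beyond a single face, whose visible region is the star of that face. Even when $t_B$ exists, Theorem~\ref{thm:construction}(i) shows that every vertex or edge of $Q$ whose facets all lie in $B$ stops being a face. So $t_B$ is adjacent only to the vertices on the boundary of the ball $B$, and $G(P)[S]$ is $G(Q)$ minus the interior edges of the patches. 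Hence both ``$t_B$ is adjacent to all $r$ vertices of $B$'' and ``it suffices that $G(Q)$ be $r$-connected'' need conditions you never verify. The per-block count is read off from the target formula rather than derived. A block of $m$ indices supports at most $2m$ windows of $m$ consecutive indices, even counting both rows and every starting point, whereas $(d-3)N-2(d-4)\ge N=Am\ge 3m$. Finally, $r$-connectivity, which you call the bulk of the work, is not proved.

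\textbf{The missing idea.} Avoid prescribing visible regions altogether. The paper takes $Q=R\oplus D$, with $R$ a stacked $(d-2)$-polytope on $N+d-4$ vertices and $D$ an $A$-gon. It forms $\Kl(Q)$, which only stacks over single facets. Only then does it replace each edge $f$ of $D$ by a path $L_f$ of $m$ edges, using stellar subdivisions, i.e.\ points beyond an edge. Every step is a standard beyond-a-face operation, so realizability is free. Remark~\ref{rmk:repeated-stellar-subdivisions} then shows that each apex $p_{F,f}$ rises from degree $d$ to degree $(d-2)+(m+1)=r$, while $T$ stays independent. The neighbourhood of $p_{F,f}$ is $\V(F)\cup\V(L_f)$: a window of $m+1$ consecutive vertices of the $N$-cycle on $W$, together with the $d-2$ vertices of a facet $F$ of $R$. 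Within a block (fixed $f$) it is $F$ that varies, not the window, and $(d-3)N-2(d-4)=f_{d-3}(R)$ counts these choices. Moreover, $G(P)[S]$ is the join of $G(R)$ and the $N$-cycle. So $r$-connectivity follows in a few lines from Theorem~\ref{thm:balinski} applied to $R$, together with $\card U\ge r-1$ and $\card W\ge r-d+3$, with no band-type Menger argument.
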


\begin{corollary}\label{cor:main}
 For every pair of integers $d\ge4$ and $r\ge d$ and every
 $\eps>0$, there is a nontraceable simplicial $d$-polytope  whose
 graph has connectivity exactly $r$ and toughness less than $\eps$.

Consequently, for every fixed $d\ge4$, neither constant $c_1$ nor
constant $c_2$ in Question~\ref{q:main} exists. In particular, the answer
to Mohar's question~\ref{q:mohar} is negative.
\end{corollary}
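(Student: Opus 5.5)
The corollary should follow from Theorem~\ref{thm:main} by choosing $A$ large and then reading off the consequences.

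Fix $d\ge4$, $r\ge d$ and $\eps>0$. Choose an integer $A\ge3$ with $2/A<\eps$, say $A=\max\{3,\lfloor 2/\eps\rfloor+1\}$. Apply Theorem~\ref{thm:main} with these $d,r,A$ to get a simplicial $d$-polytope $P$. By part (i), $G(P)$ has connectivity exactly $r$. By part (iii), $\tau(G(P))\le 2/A<\eps$. The theorem already states that $G(P)$ is nontraceable. For completeness, this follows from (ii)--(iii): $S$ separates $G(P)$ into at least $\card T$ components, since $T$ is independent and each vertex of $T$ is isolated in $G(P)-S$. Moreover,
\[
\card T=A\bigl((d-3)N-2(d-4)\bigr)\ge A(N-2(d-4))
\]
exceeds $\card S+1=2N+d-3$ once $A\ge3$. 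One checks this directly in the case $d=4$, and for larger $d$ by expanding $(d-3)N$ with $N=A(r-d+1)\ge3$. A hamiltonian path $Q$ satisfies $c(Q-S)\le\card S+1$, so no such path exists. This proves the first paragraph of the corollary.

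For the consequences, suppose a constant $c_1$ as in Question~\ref{q:main}(i) existed for some $d\ge4$. Take $r=\max\{c_1,d\}$ and any $\eps>0$. The polytope just built is then a simplicial, $r$-connected, hence $c_1$-connected, $d$-polytope. It is nonhamiltonian, because a hamiltonian graph is traceable (equivalently, by Lemma~\ref{lem:k-walk-tree-toughness} it would be $1$-tough, contradicting toughness $<\eps\le1$). This is a contradiction. Since simplicial polytopes are polytopes, the same example rules out $c_2$. Taking $d=4$ rules out the constant $c$ in Mohar's Question~\ref{q:mohar}.

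All the real work lies in Theorem~\ref{thm:main}. The only step here that needs care is confirming the inequality $\card T>\card S+1$, or simply citing the theorem's nontraceability conclusion, and the choice of $A$ relative to $\eps$.
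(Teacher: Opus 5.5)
Your proposal is correct and is essentially the paper's proof: choose $A>\max\{2,2/\eps\}$, apply Theorem~\ref{thm:main}, and use that $r$ can be taken arbitrarily large in each fixed $d$ (your choice $r=\max\{c_1,d\}$ does this). Two points in the optional asides are loose, though neither is needed. First, your displayed bound $A(N-2(d-4))$ can be negative when $N<2(d-4)$ (for example $d=6$, $r=d$, $A=3$), so it cannot give $\card T>\card S+1$. The clean check is the paper's $\card S\le 2f_{d-3}(R)$, which gives $\card T-\card S\ge(A-2)f_{d-3}(R)\ge3$. Second, your parenthetical toughness argument requires $\eps\le1$; the fact that a hamiltonian graph is traceable already suffices.
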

\begin{proof}[Proof of Corollary~\ref{cor:main} from Theorem~\ref{thm:main}]
Choose an integer $A>\max\set{2,2/\eps}$ and apply
Theorem~\ref{thm:main}. The resulting graph has toughness
at most $2/A<\eps$. 
For each fixed $d\ge4$, the integer $r$ can be chosen arbitrarily large, so these nontraceable examples answer both parts of
Question~\ref{q:main} in the negative.
\end{proof}

A
\emph{$k$-walk} is a closed spanning walk in which every vertex occurs
at most $k$ times; a hamiltonian cycle is a 1-walk.    A $k$-tree is a spanning tree
with all vertices of degree at most $k$;  a 2-tree is precisely a hamiltonian 
path.  The \emph{prism} over a graph $G$ is the Cartesian product
$G\cart K_2$; it consists of two copies of $G$, with an additional
edge between the two copies of each vertex. We identify its vertex
set with $\V(G)\times\{0,1\}$. A graph is \emph{prism-hamiltonian}
if its prism is hamiltonian. For a polytope $P$, the graph of the
polytope $P\times[0,1]$ is $G(P)\cart K_2$. These notions are related by the following implications
\cite{KaiRyjKra07}:
\begin{equation*}\label{eq:property-implications}
    \text{hamiltonian cycle $\Rightarrow$ hamiltonian path  $\Rightarrow$ hamiltonian prism $\Rightarrow$  $2$-walk $\Rightarrow$ 3-tree}
\end{equation*}

The nontraceability assertion in Theorem~\ref{thm:main} and several
consequences of the construction follow from component-count
obstructions. Assertions (iii) and (iv) below are recorded in \cite[Lem.~2.1]{JW90} and
\cite[Prop.~1]{Oze15}, respectively.

\begin{lemma}[Component-count-toughness  obstructions, {\cite[Lem.~2.1]{JW90},\cite[Prop.~1]{Oze15}}]\label{lem:k-walk-tree-toughness}
Let $G$ be a graph, let $\emptyset\ne X\subseteq\V(G)$, and let
$k\ge1$ be an integer. Then the following hold.
\begin{enumerate}[\rm (i)]
\item If $G$ is hamiltonian, then $c(G-X)\le\card X$.
\item If $G$ is traceable, then $c(G-X)\le\card X+1$.
\item If $G$ has a $k$-walk, then $c(G-X)\le k\cdot \card X$.
\item If a graph \(G\) has a \(k\)-tree, then
\(
 c(G-X)\leq (k-1)\card X+1.
\)
\end{enumerate}
\end{lemma}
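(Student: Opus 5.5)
The plan is to handle the four parts by one mechanism: take a spanning structure of the required kind, delete $X$, and count how often the structure crosses between $X$ and $G-X$. Every component of $G-X$ must be touched by the structure, since the structure is spanning and $X\ne\emptyset$ (if $X=\V(G)$ then $c(G-X)=0$ and all bounds hold trivially). Each component therefore contributes a definite number of "attachment points" at vertices of $X$. The bound then comes from comparing the number of components with the capacity of the vertices of $X$.

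\textbf{(i) and (ii).} Let $C$ be a hamiltonian cycle. Deleting $X$ from $C$ splits it into at most $\card X$ arcs, because each deleted vertex ends at most one arc going around the cycle. Each arc lies inside a single component of $G-X$, and every vertex of $G-X$ lies on some arc. Hence $c(G-X)\le\card X$.

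For a hamiltonian path $Q$, deleting $\card X$ vertices from a path leaves at most $\card X+1$ subpaths. The same covering argument then gives $c(G-X)\le\card X+1$.

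\textbf{(iii).} Let $W=(w_0,w_1,\dots,w_m=w_0)$ be a $k$-walk and regard it cyclically. Since $X\neq\emptyset$, some vertex of $X$ occurs on $W$. The occurrences of vertices of $X$ cut the cyclic sequence into maximal segments that avoid $X$. The number of these segments is at most the number of occurrences of $X$-vertices, and that number is at most $k\card X$. Consecutive vertices of a segment are adjacent in $G-X$, so each segment lies in one component. Because $W$ is spanning, every component contains a vertex of some segment, so $c(G-X)\le k\card X$.

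\textbf{(iv).} Let $F$ be a spanning tree with maximum degree at most $k$. Then $F-X$ is a forest whose components refine those of $G-X$, so $c(G-X)\le c(F-X)$. For a forest, deleting a vertex set $X$ gives
\[
c(F-X)\le 1+\sum_{x\in X}\bigl(\deg_F(x)-1\bigr).
\]
This follows by deleting the vertices of $X$ one at a time, since removing a vertex of degree $\delta$ from a forest increases the number of components by at most $\delta-1$. Degrees only drop as vertices are removed, and when a component vanishes entirely the count drops by one rather than rising, so the bound still holds. Bounding each $\deg_F(x)\le k$ gives $c(G-X)\le (k-1)\card X+1$.

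The only delicate step is this per-vertex deletion inequality, in particular the degenerate case where $\deg_F(x)=0$ at the moment of deletion. Inducting on $\card X$ with the current degree handles it. The remaining parts are routine segment counting.
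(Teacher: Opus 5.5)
Your proof is correct in all four parts, but it does not follow a proof in the paper, because the paper gives none. It leaves (i) and (ii) as standard and quotes (iii) and (iv) from Jackson--Wormald \cite[Lem.~2.1]{JW90} and Ozeki \cite[Prop.~1]{Oze15}. Your version is the self-contained standard argument behind those citations.

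\begin{itemize}
\item For (i)--(iii), you cut the cycle, path, or cyclically read walk at the occurrences of vertices of $X$. Each maximal $X$-free segment lies in one component of $G-X$, and there are at most as many segments as occurrences (plus one for a path). Reading the walk cyclically, so that $w_0=w_m$ counts once, is the right convention: it is the one under which a hamiltonian cycle is a $1$-walk, as the paper states.
\item For (iv), passing to the spanning forest $F-X$, whose components refine those of $G-X$, is the right reduction.
\end{itemize}

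You can drop the one-vertex-at-a-time induction in (iv), and its degenerate case. Since every forest satisfies $c=\card{\V}-\card{\E}$, and $\sum_{x\in X}\deg_F(x)$ counts edges inside $X$ twice, you get the exact identity
\[
 c(F-X)=1+\sum_{x\in X}\bigl(\deg_F(x)-1\bigr)-\card{\E(F[X])}.
\]
Dropping the last term and using $\deg_F(x)\le k$ gives the bound at once. It also shows the only slack comes from edges of the tree with both ends in $X$.

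Citing costs the paper nothing, since the lemma is used only as a black box in the corollaries and in Theorem~\ref{thm:main}(iii). Your route makes the lemma checkable in place.
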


Our construction gives a simplicial
$d$-polytope whose graph has connectivity exactly $r$ and has neither
a $k$-walk nor a $k$-tree.

\begin{corollary}\label{cor:no-k-walk-tree}
For every pair of integers $d\ge4$ and $r\ge d$ and every integer
$k\ge2$, there is a simplicial $d$-polytope whose graph has
connectivity exactly $r$, has no $k$-walk, and has no $k$-tree. One may take any integer $A\ge2k+1$
in Theorem~\ref{thm:main}.
\end{corollary}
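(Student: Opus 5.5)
We derive Corollary~\ref{cor:no-k-walk-tree} from Theorem~\ref{thm:main} and the component-count obstructions of Lemma~\ref{lem:k-walk-tree-toughness}, using the separator $S$ itself as the set $X$. Fix $d\ge4$, $r\ge d$, $k\ge2$, and an integer $A\ge 2k+1$; in particular $A\ge5\ge3$, so Theorem~\ref{thm:main} applies and yields a simplicial $d$-polytope $P$ with $\V(P)=S\cup T$. Part~(i) of the theorem already gives connectivity exactly $r$. It remains to show $c(G(P)-S)>k\cdot\card S$, which rules out a $k$-walk by Lemma~\ref{lem:k-walk-tree-toughness}(iii). Since $(k-1)\card S+1\le k\card S$ whenever $\card S\ge1$, the same inequality also rules out a $k$-tree by Lemma~\ref{lem:k-walk-tree-toughness}(iv).

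First, by Theorem~\ref{thm:main}(ii), $T$ is independent in $G(P)$ and $\V(P)\setminus S=T$. Hence $G(P)-S$ is edgeless on $T$, so $c(G(P)-S)=\card T$. Note $S\ne\emptyset$ because $\card S=2N+d-4\ge 2N>0$. The task is therefore to verify $\card T>k\card S$.

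The main step is this counting inequality. We will not use the stated bound $\card S/\card T\le 2/A$ as a black box, since it only gives $\card T\ge (A/2)\card S\ge (k+\tfrac12)\card S$. That is already strictly greater than $k\card S$ because $\card S>0$, so in fact the black-box bound suffices. For safety, we will also sanity-check it directly from the formulas. With $N=A(r-d+1)\ge A$ and $d\ge4$, we have
\[
\card T=A\bigl((d-3)N-2(d-4)\bigr)\quad\text{and}\quad \tfrac{A}{2}\card S=A\bigl(N+\tfrac{d-4}{2}\bigr).
\]
Their difference is $A\bigl((d-4)N-\tfrac52(d-4)\bigr)=A(d-4)(N-\tfrac52)\ge0$, since $N\ge A\ge5$. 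This confirms $\card T\ge \tfrac A2\card S$.

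Combining, $c(G(P)-S)=\card T\ge \tfrac A2\card S\ge(k+\tfrac12)\card S>k\card S\ge (k-1)\card S+1$. This contradicts both (iii) and (iv) of Lemma~\ref{lem:k-walk-tree-toughness}. The only potential obstacle is ensuring the strict inequality and the nonemptiness of $S$; both are immediate from the above.
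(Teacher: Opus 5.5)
Your proof is correct and follows the paper's own argument: take $X=S$, observe $c(G(P)-S)=\card T\ge\frac{A}{2}\card S>k\card S\ge(k-1)\card S+1$, and apply parts (iii) and (iv) of Lemma~\ref{lem:k-walk-tree-toughness}. Your direct recomputation from the formulas for $\card S$ and $\card T$ is correct but unnecessary, since Theorem~\ref{thm:main}(iii) already gives $\card T\ge\frac{A}{2}\card S$.
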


 \begin{proof}[Proof of Corollary~\ref{cor:no-k-walk-tree} from Theorem~\ref{thm:main}]
Choose $A\ge2k+1$, and let $P$, with partition
$S\cup T$, be given by Theorem~\ref{thm:main}.
Put $G:=G(P)$. By Theorem~\ref{thm:main}(iii),
\[
 c(G-S)=\card T\ge\frac{A}{2}\card S>k\cdot \card S.
\]
Lemma~\ref{lem:k-walk-tree-toughness}(iii) rules out a $k$-walk.
Moreover, since $\card S>1$, we find 
\[
 c(G-S)>k\cdot \card S>1+(k-1)\card S.
\]
Lemma~\ref{lem:k-walk-tree-toughness}(iv) therefore rules out a $k$-tree.
\end{proof}

Thus, even with the dimension and connectivity both fixed, there is
no uniform bound on the number of visits per vertex in a closed
spanning walk or on the maximum degree of a spanning tree.
Barnette's and Gr\"unbaum's constructions \cite[Sec.~5]{Bar66}
already give, for every $k$, a simplicial $d$-polytope with $d\ge4$
that admits no $k$-tree; see also
\cite[Sec.~3, Item~3]{IkeMaeZam21}. However, their graphs have
connectivity exactly $d$. The spanning-tree conclusion in
Corollary~\ref{cor:no-k-walk-tree} strengthens these results by
allowing the connectivity to be prescribed as any integer $r\ge d$
in every fixed dimension $d\ge4$.

Some of our polytopes have nontraceable prisms.

\begin{corollary}\label{cor:nontraceable-prism}
For every pair of integers $d\ge4$ and $r\ge d$, there is a
simplicial $d$-polytope $P$ whose graph has connectivity exactly $r$
and whose prism $G(P)\cart K_2$ is nontraceable. One may take any
integer $A\ge5$ in Theorem~\ref{thm:main}.
\end{corollary}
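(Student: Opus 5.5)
We take $P$ from Theorem~\ref{thm:main} with $A\ge5$, set $G:=G(P)$, and argue by contradiction. Suppose the prism $H:=G\cart K_2$ has a hamiltonian path. The natural separator in $H$ is $X:=S\times\{0,1\}$, with $\card X=2\card S$. Because $T$ is independent in $G$, each component of $H-X$ is a single edge $\{(t,0),(t,1)\}$ for some $t\in T$. Hence
\[
c(H-X)=\card T .
\]
By Lemma~\ref{lem:k-walk-tree-toughness}(ii), traceability of $H$ forces $\card T\le 2\card S+1$. The goal is therefore to show $\card T>2\card S+1$ whenever $A\ge5$.

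The inequality in Theorem~\ref{thm:main}(iii) only gives $\card T\ge \tfrac A2\card S\ge \tfrac52\card S$. For this to beat $2\card S+1$ we need $\tfrac12\card S>1$, that is, $\card S>2$. This holds, since $\card S=2N+d-4\ge 2A\ge 10$. The conclusion is then
\[
c(H-X)=\card T\ge \tfrac52\card S>2\card S+1=\card X+1,
\]
which contradicts Lemma~\ref{lem:k-walk-tree-toughness}(ii). So $H$ is nontraceable, and $G$ has connectivity exactly $r$ by Theorem~\ref{thm:main}(i).

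As a sanity check, and in case the bound in (iii) is stated loosely, we can also verify $\card T>2\card S+1$ directly from the explicit cardinalities. We have $\card T=A((d-3)N-2(d-4))$ and $\card S=2N+d-4$. For $d=4$ this reads $AN$ versus $4N+1$, which holds for $A\ge5$ because $N\ge A\ge 5>1$. For larger $d$, the term $A(d-3)N$ dominates $4N+2d-7$ comfortably, since $N\ge A\ge5$ absorbs the $-2A(d-4)$ and $2d$ terms.

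The only points needing care are the component count and the strictness of the inequality. For the count, each $t\in T$ has all its $G$-neighbours in $S$, so in $H$ the vertex $(t,i)$ is adjacent, outside $X$, only to $(t,1-i)$. For strictness, we must use $\card S\ge3$, which is the step that uses $A\ge5$ rather than $A\ge4$.
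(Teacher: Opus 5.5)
Your proposal is correct and is essentially the paper's own argument: delete $S\times\{0,1\}$ from the prism, observe that the remainder is $\card T$ disjoint edges, and combine $\card T\ge\frac52\card S>2\card S+1$ with Lemma~\ref{lem:k-walk-tree-toughness}(ii). One small correction to your closing remark: the hypothesis $A\ge5$ is what makes the factor $A/2$ exceed $2$, while $\card S\ge3$ (indeed $\card S\ge6$) already holds for every $A\ge3$, so you are describing two separate ingredients as a single step.
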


\begin{proof}[Proof of Corollary~\ref{cor:nontraceable-prism} from Theorem~\ref{thm:main}]
Choose an integer $A\ge5$, and let $P$, with partition
$S\cup T$, be given by Theorem~\ref{thm:main}.
Put $G:=G(P)$, and delete $S\times\{0,1\}$ from
$G\cart K_2$. The remaining graph consists
of $\card T$ disjoint edges, one joining the two copies of each
vertex of $T$. Consequently,
\[
 \begin{aligned}
 c\bigl((G\cart K_2)-(S\times\{0,1\})\bigr)
   &=\card T\ge\frac52\card S>2\card S+1
    =\card{(S\times\{0,1\})}+1,
 \end{aligned}
\]
where the strict inequality follows from $\card S\ge6$.
Lemma~\ref{lem:k-walk-tree-toughness}(ii) proves nontraceability.
\end{proof}

For a graph $H$ containing a cycle, let $\cf(H)$
denote its \emph{circumference}, the length of a longest
cycle. For an infinite family $\mathcal G$ of graphs containing
cycles and having unbounded order, the \emph{shortness coefficient}
and \emph{shortness exponent} of Gr\"unbaum and Walther \cite{GruWal73} are
\[
 \begin{aligned}
 \rho(\mathcal G)
   &:=\liminf_{\substack{H\in\mathcal G\\\card{\V(H)}\to\infty}}
       \frac{\cf(H)}{\card{\V(H)}},\qquad
 \sigma(\mathcal G)
   &:=\liminf_{\substack{H\in\mathcal G\\\card{\V(H)}\to\infty}}
       \frac{\log\cf(H)}{\log\card{\V(H)}}.
 \end{aligned}
\]

\begin{corollary}\label{cor:shortness}
Fix integers $d\ge4$ and $r\ge d$, and let $\mathcal S_{d,r}$ be the
family of graphs of simplicial $d$-polytopes with connectivity
exactly $r$.
\begin{enumerate}[{\rm (i)}]
\item For a graph $G$ obtained from Theorem~\ref{thm:main}, every
cycle has at most $2\card S$ vertices and every path has at most
$2\card S+1$ vertices.
\item We have
\(
 \rho(\mathcal S_{d,r})=0\) and \(\sigma(\mathcal S_{d,r})\le\frac12
\). 
More precisely, $\mathcal S_{d,r}$ contains graphs of unbounded order $n$
whose longest  paths and cycles have $O(\sqrt n)$ vertices.
\end{enumerate}
\end{corollary}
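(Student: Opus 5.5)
Both parts follow from Theorem~\ref{thm:main} by counting how a path or cycle passes through the independent set $T$; no further geometric input is needed.

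For (i), let $G$ and the partition $S\cup T$ come from Theorem~\ref{thm:main}. Since $T$ is independent, consecutive vertices of a cycle $C$ cannot both lie in $T$. Each vertex of $T$ on $C$ is therefore followed by a vertex of $S$. This gives an injection from $\V(C)\cap T$ into $\V(C)\cap S$, so $\card(\V(C)\cap T)\le\card S$ and $\card\V(C)\le 2\card S$.

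For a path $Q$ we argue the same way. The vertices of $T$ on $Q$ are pairwise nonconsecutive, so deleting $Q\cap S$ splits $Q$ into at most $\card(Q\cap S)+1$ segments. Each segment contains at most one vertex of $T$ and in fact consists of that single vertex, because $T$ is the whole complement of $S$. Hence $\card(Q\cap T)\le\card S+1$ and $\card\V(Q)\le 2\card S+1$.

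For (ii), fix $d$ and $r$ and let $A\to\infty$ in Theorem~\ref{thm:main}. The graphs are simplicial $d$-polytopes with connectivity exactly $r$, so they lie in $\mathcal S_{d,r}$. With $N=A(r-d+1)$ we have $\card S=2N+d-4=\Theta(A)$ and $\card T=A\bigl((d-3)N-2(d-4)\bigr)=\Theta(A^2)$, using $d\ge4$ so that $d-3\ge1$. Hence $n=\card S+\card T=\Theta(A^2)$, which is unbounded, and $\card S=O(\sqrt n)$. By (i), longest paths and cycles have $O(\sqrt n)$ vertices.

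Cycles exist because the graphs have connectivity $r\ge4$. The circumference is therefore defined and satisfies $\cf(H)\le 2\card S=O(\sqrt n)$. It follows that $\cf(H)/n\to0$, giving $\rho=0$ since $\rho\ge0$ trivially. Also $\log\cf(H)/\log n\le(\log C+\tfrac12\log n)/\log n\to\tfrac12$, so $\sigma(\mathcal S_{d,r})\le\tfrac12$.

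The only point needing care is the bookkeeping that $\card T$ grows quadratically in $A$ while $\card S$ grows linearly, with the implied constants depending only on $d$ and $r$. The rest is elementary.
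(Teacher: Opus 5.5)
Your proof is correct and follows the paper's argument. Part (i) is the same count of $T$-vertices against $S$-vertices using the independence of $T$; you spell out the successor injection and the segment count that the paper leaves implicit. Part (ii) is the same comparison of $\card S=\Theta(A)$ with $n=\Theta(A^2)$ along the graphs $G_A$ as $A\to\infty$.
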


\begin{proof}
(i) Since $T$ is independent, a cycle contains at most as many
vertices of $T$ as of $S$, and a path contains at most one more.
This gives the asserted vertex bounds.

(ii) For each integer $A\ge3$, choose a graph $G_A$ from 
Theorem~\ref{thm:main}, with partition $S_A\cup T_A$, and put
$n_A:=\card{\V(G_A)}$. The theorem gives
\[
 \begin{aligned}
 n_A&=(d-3)(r-d+1)A^2+O(A)\; \text{and}\;
 \card{S_A}&=2(r-d+1)A+d-4.
 \end{aligned}
\]
Thus $n_A\to\infty$ as $A\to\infty$. Part~(i) now ensures \(\cf(G_A)\le C\sqrt{n_A}\), where  \(C>0\) depends only on \(d\) and \(r\).  Consequently, \[
 \frac{\log\cf(G_A)}{\log n_A}
 \le\frac12+\frac{\log C}{\log n_A},
 \qquad
 \frac{\cf(G_A)}{n_A}\le\frac{C}{\sqrt{n_A}}\longrightarrow0.
\]
Since $G_A\in\mathcal S_{d,r}$, we find 
\[
 \begin{aligned}
 \sigma(\mathcal S_{d,r})
 &\le\liminf_{A\to\infty}
     \frac{\log\cf(G_A)}{\log n_A}\le\frac12\quad \text{and}\quad 
 0\le\rho(\mathcal S_{d,r})
 &\le\liminf_{A\to\infty}
     \frac{\cf(G_A)}{n_A}=0.
 \end{aligned}
\]
This completes the proof.
\end{proof}


\subsection{Existing nonhamiltonicity results for higher-dimensional polytopes}

In dimensions at least four, nontraceability, and hence
nonhamiltonicity, is classical. Gr\"unbaum and Motzkin~\cite{GruMot62}
constructed families of $d$-polytopes on $n$ vertices, for every fixed
$d\ge3$, whose longest paths have sublinear length
$O(n^{1-2^{-19}})$. Moon and Moser~\cite{MooMos63} improved this bound
to $O(n^{\log_d2})$ using simplicial $d$-polytopes. The Moon--Moser
construction uses successive Kleetopes of a simplex; Kleetopes are defined in Section~\ref{sec:preliminaries}. 
Thus being simplicial alone guarantees neither traceability nor
hamiltonicity.

The elementary Kleetope obstruction is also described by Klee in
Gr\"unbaum's book~\cite[Sec.~17.1]{Gru03}. If a simplicial polytope has
$v$ vertices and $f>v$ facets, then its Kleetope is nonhamiltonian:
deleting the original vertices leaves $f$ isolated vertices.
The same deletion proves nontraceability when $f>v+1$ and gives
toughness at most $v/f$. 

Another source of nonhamiltonian $4$-polytopes is the prism
construction.
\v{S}pacapan~\cite{Spa21} disproved the Rosenfeld--Barnette conjecture
that every $3$-connected planar graph has a hamiltonian prism.
By Steinitz's theorem, his counterexample yields a $3$-polytope whose
prism is a nonhamiltonian $4$-polytope.
\v{S}pacapan also noted that a slight modification of his proof shows
that sufficiently large members of his family have nontraceable prisms.
Extending his construction, Ikegami, Maezawa, and
Zamfirescu~\cite{IkeMaeZam21} obtained an infinite family of
nonhamiltonian $4$-polytopes in which the proportion of vertices
of degree four tends to one.

In the Moon--Moser construction, the vertices added at the last stage
have degree $d$, so the resulting graphs have connectivity exactly $d$.
The prisms constructed by \v{S}pacapan and by Ikegami, Maezawa, and
Zamfirescu have connectivity exactly four.
These examples therefore do not yield arbitrarily high connectivity
in a fixed dimension. Corollary~\ref{cor:nontraceable-prism} gives nontraceable prisms over
simplicial polytopes in every fixed dimension $d\ge4$, with the
graphs of the base polytopes having any prescribed connectivity
$r\ge d$.

The Moon--Moser bound already implies shortness coefficient zero
and shortness exponent at most $\log_d2$ for the family of all
graphs of simplicial $d$-polytopes. Corollary~\ref{cor:shortness}
gives coefficient zero and exponent at most $1/2$ for each family
$\mathcal S_{d,r}$ by constructing graphs whose longest paths and
cycles have $O(\sqrt n)$ vertices. The exponent $1/2$ equals the Moon--Moser
bound when $d=4$ and is larger when $d>4$. The contribution here is the additional requirement
of having connectivity exactly a prescribed $r\ge d$.

There is a related distinction for spanning trees.
Barnette~\cite[Thm.~1]{Bar66} proved that every 3-polytope admits a 3-tree. He also constructed,  for each $k$, a $5$-polytope admitting
no $k$-tree~\cite[Thm.~2]{Bar66},  and recorded Gr\"unbaum's analogous
$4$-dimensional construction~\cite[Sec.~5]{Bar66}; see also
\cite[Sec.~3, Item~3]{IkeMaeZam21}. Both constructions use
Kleetopes of cyclic polytopes and are therefore simplicial.
Their added vertices have degree $d$, so their graphs have
connectivity exactly $d$. Corollary~\ref{cor:no-k-walk-tree}
extends this phenomenon to every fixed dimension $d\ge4$ and
every prescribed connectivity $r\ge d$, ruling out both $k$-trees
and $k$-walks.

A polytope is \emph{simple} if its dual is simplicial.
Barnette's conjecture for simple $4$-polytopes%
~\cite[Conj.~1.5]{Gru70} has the following higher-dimensional form.

\begin{conjecture}[Barnette's conjecture, higher-dimensional form]
\label{conj:Barnette-simple-hamiltonian}
Every simple $d$-polytope with $d\ge4$ is hamiltonian.
\end{conjecture}

The duals of our polytopes are hamiltonian by
Proposition~\ref{prop:dual-hamiltonian}, so this construction
yields no counterexamples to
Conjecture~\ref{conj:Barnette-simple-hamiltonian}.

Section~\ref{sec:preliminaries} gives the preliminaries,
Section~\ref{sec:main} the construction and proof of
Theorem~\ref{thm:main}, and Appendix~\ref{app:dual-hamiltonian}
the proof of Proposition~\ref{prop:dual-hamiltonian}.

\section{Preliminaries}
\label{sec:preliminaries}

For a polytope $P$, write $f_j(P)$ for the number of its $j$-faces.
We first record the properties of direct sums of polytopes that we use.
Let
\(U\subset \R^d\) and \(W\subset \R^e\) be a $d$-polytope and an $e$-polytope, respectively, with $d,e\ge1$, translated so that
the origin lies in their relative interiors. The \emph{direct sum} \(U\oplus W\) of \(U\) and \(W\) is the \((d+e)\)-polytope \[
 U\oplus W
 :=\conv\paren*{
      \paren*{U\times\set{0}}
      \cup
      \paren*{\set{0}\times W}
    }
 \subseteq\R^{d+e}.
\]
Here $\conv X$ denotes the convex hull of $X$.  We gather some basic properties of a direct sum \cite[Sec.~2.6]{Pin24}.

\begin{proposition} Let
\(U\subset \R^d\) and \(W\subset \R^e\) be a $d$-polytope and an $e$-polytope, respectively, with $d,e\ge1$, translated so that
the origin lies in their relative interiors. 
    \label{prop:direct-sum-basic}

    \begin{enumerate}[{\rm (i)}]
\item The proper faces of $U\oplus W$ are precisely the sets
\[
 \conv\bigl((F\times\{0\})\cup(\{0\}\times F')\bigr),
\]
where $F$ and $F'$ are faces of $U$ and $W$, respectively,
with $F\ne U$ and $F'\ne W$, and at most one of them is empty. 
Its facets are precisely the joins of a facet of $U$ and a
facet of $W$. In particular, the direct sum of two simplicial
polytopes is simplicial.

\item In \(U\oplus W\)  every vertex of \(U\) is adjacent to every vertex of
\(W\). Moreover, if \(\dim U\geq2\), then the subgraph of
\(G(U\oplus W)\) induced by \(\V(U)\) is \(G(U)\).
    \end{enumerate}
\end{proposition}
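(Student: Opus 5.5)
Both parts follow from the standard description of the faces of $U\oplus W$ via polarity. Assuming the origin lies in the interiors of $U$ and $W$ (each in its own space), I will use the identity $(U\oplus W)^\circ = U^\circ\times W^\circ$. This is checked directly from the definition of the polar: a point $(y,z)$ satisfies $\langle (y,z),(x,0)\rangle\le1$ for all $x\in U$ and $\langle (y,z),(0,w)\rangle\le 1$ for all $w\in W$ exactly when $y\in U^\circ$ and $z\in W^\circ$. By convexity, these two families of inequalities suffice for the whole convex hull.

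\textbf{Faces of the product.} The nonempty faces of a product of polytopes are exactly the products $G\times G'$ of nonempty faces $G$ of $U^\circ$ and $G'$ of $W^\circ$. Under the inclusion-reversing face duality between $P$ and $P^\circ$, the face $G\times G'$ corresponds to the face of $U\oplus W$ consisting of the points that have inner product $1$ with all of $G\times G'$. Writing $F=\widehat G$ and $F'=\widehat{G'}$ for the dual faces in $U$ and $W$, I will check that this face is $\conv\bigl((F\times\{0\})\cup(\{0\}\times F')\bigr)$.

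The key computation is that a point of $U\oplus W$ can be written as $\lambda(x,0)+(1-\lambda)(0,w)$. For this point, the condition of having inner product $1$ with every $(y,z)\in G\times G'$ forces each active summand to attain its maximum. So $x\in F$ whenever $\lambda>0$, and $w\in F'$ whenever $\lambda<1$.

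\textbf{Proper faces and facets.} Proper nonempty faces of $U\oplus W$ correspond to nonempty faces of the product. The case $G=U^\circ$ gives $F=\emptyset$, and similarly $G'=W^\circ$ gives $F'=\emptyset$. Since $G$ and $G'$ are nonempty, $F\ne U$ and $F'\ne W$. Excluding $G\times G'$ equal to the whole product is exactly the condition that at most one of $F,F'$ is empty. Facets correspond to vertices $(v,v')$ of the product, that is, to pairs of facets of $U$ and $W$, and the corresponding face is the join of those two facets.

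\textbf{Simpliciality.} The join of two simplices lying in complementary affine subspaces through linearly independent directions is a simplex. Here the two simplices have dimensions $d-1$ and $e-1$, so their join has $d+e$ affinely independent vertices. Hence $U\oplus W$ is simplicial whenever $U$ and $W$ are.

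\textbf{Part (ii).} Take a vertex $u$ of $U$ and a vertex $w$ of $W$. Applying (i) with $F=\{u\}$ and $F'=\{w\}$, both proper, shows that $\conv\{(u,0),(0,w)\}$ is a face, so $u$ and $w$ are adjacent. For the induced subgraph, (i) says that the faces of $U\oplus W$ contained in $U\times\{0\}$ are exactly those with $F'=\emptyset$. These are the proper faces $F$ of $U$. Hence two vertices of $U$ are adjacent in $U\oplus W$ if and only if they span a proper edge of $U$.

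The hypothesis $\dim U\ge2$ enters here. When $\dim U=1$, the segment $U$ itself is not proper, so its two endpoints do not span an edge of the sum. When $\dim U\ge2$, every edge of $U$ is a proper face, so the induced subgraph is exactly $G(U)$.

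\textbf{Main obstacle.} The only delicate point is the face-lattice bookkeeping in (i): keeping track of the empty and improper faces under duality so that "at most one empty" comes out correctly. I will handle it carefully via the polar correspondence as above.
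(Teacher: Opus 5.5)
Your proof is correct. The paper does not prove this proposition at all; it records it from \cite[Sec.~2.6]{Pin24}. Your argument via the polarity $(U\oplus W)^\circ=U^\circ\times W^\circ$ therefore supplies a self-contained proof where the paper relies on the literature. This is the standard route. Its payoff is the bookkeeping you flagged: faces of a product are trivially the products of faces, so the empty and improper cases for the direct sum follow from the inclusion-reversing correspondence without separate case analysis.

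When you write it up, a few facts you use implicitly deserve one line each.
\begin{enumerate}[(a)]
\item You need $0\in\ints(U\oplus W)$, so that face duality for $U\oplus W$ is a bijection.
\item Your key computation shows only one inclusion. The reverse inclusion is that every point of $\conv\bigl((F\times\{0\})\cup(\{0\}\times F')\bigr)$ has inner product $1$ with all of $G\times G'$; this is immediate by linearity.
\item You need $0\notin\aff F$ for every proper face $F$ of $U$, and likewise for $W$. This follows from $0\in\ints U$.
\end{enumerate}

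Fact (c) is doing real work in two places.
\begin{itemize}
\item It makes the $d$ vertices of a facet of $U$ linearly independent, and likewise for a facet of $W$. The $d+e$ vertices of the join are then linearly, hence affinely, independent. ``Complementary subspaces'' alone does not suffice: two segments through the origin on complementary lines have a square as their join.
\item It guarantees $(0,w)\notin U\times\{0\}$ for $w\in F'\neq\emptyset$. Hence the faces contained in $U\times\{0\}$ are exactly those with $F'=\emptyset$, which your part (ii) relies on.
\end{itemize}

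You can also get simpliciality by counting. The join of facets of $U$ and $W$ is a $(d+e-1)$-dimensional face of $U\oplus W$ whose vertices lie among the $d+e$ vertices of the two simplices. It therefore has exactly $d+e$ vertices and is a simplex.
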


For connectivity arguments we use Balinski's theorem.

\begin{theorem}[Balinski~\cite{Bal61}]\label{thm:balinski}
The graph of a $d$-polytope is $d$-connected.
\end{theorem}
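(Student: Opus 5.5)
The plan is the classical linear-functional argument. First, the vertex count is immediate: a $d$-polytope $P$ has at least $d+1$ vertices, so it has more than $d$ vertices as required. Now fix $X\subseteq\V(P)$ with $\card X\le d-1$, and let $u,w$ be two vertices outside $X$. I will show that every vertex of $G(P)-X$ can be joined within $G(P)-X$ to a common connected piece. To set this up, pick a vertex $v\notin X$; if there is nothing else to connect, we are done trivially. The set $X\cup\set{v}$ has at most $d$ points, so it lies in some affine hyperplane $H=\set{x: f(x)=c}$, where $f$ is a nonzero linear functional. Since $P$ is $d$-dimensional, $P\not\subseteq H$.

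The key tool is the simplex-method lemma. If a vertex $y$ of $P$ does not maximise $f$ over $P$, then $y$ has a neighbour $y'$ with $f(y')>f(y)$; symmetrically, if $y$ does not minimise $f$, it has a neighbour with smaller $f$-value. This holds because the cone spanned by the edge directions at $y$ contains $P-y$. Iterating gives strictly $f$-increasing paths from every vertex to the face $F^+$ of $P$ on which $f$ is maximised. Every vertex on such a path, apart from possibly the first, has $f$-value larger than $f(y)$. The graph of the face $F^+$ is connected, being the graph of a polytope (for a single vertex this is trivial).

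Then I split into two cases according to whether $H$ cuts through $P$.
\begin{enumerate}[(a)]
\item Suppose $f$ takes values both above and below $c$ on $P$. Then $F^+$ lies in $\set{f>c}$ and the minimising face $F^-$ lies in $\set{f<c}$, so neither meets $X\subseteq H$. Every vertex with $f>c$ reaches $F^+$ by an increasing path, which stays in $\set{f>c}$ and therefore avoids $X$. Likewise, every vertex with $f<c$ reaches $F^-$ by a decreasing path that avoids $X$. A vertex in $H\setminus X$, for instance $v$, is neither a maximiser nor a minimiser, so it has a neighbour on each side. In particular $v$ links the $F^+$-part and the $F^-$-part, and every vertex of $G(P)-X$ ends up in one component.
\item Suppose $H$ is a supporting hyperplane, say $f\ge c$ on $P$. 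Then $F^+\subseteq\set{f>c}$ avoids $X$, and every vertex with $f>c$ climbs to $F^+$ while avoiding $X$. Every vertex of $H\setminus X$, including $v$, is not a maximiser, so it has a neighbour with $f>c$, and that neighbour then climbs to $F^+$.
\end{enumerate}
In both cases $G(P)-X$ is connected.

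I expect the only step requiring genuine care to be the simplex-method lemma, namely that a nonoptimal vertex has an improving neighbour. I would prove it by noting that $P\subseteq y+\operatorname{cone}\set{y'-y: y'\text{ adjacent to }y}$, which follows from considering a vertex figure at $y$. If $f(y'-y)\le0$ for all neighbours $y'$, this inclusion would make $y$ a maximiser, a contradiction. Everything else is bookkeeping about which side of $H$ the paths stay on.
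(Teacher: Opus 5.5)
Your argument is correct. It is the standard linear-functional proof of Balinski's theorem, the one found for instance in Ziegler's \emph{Lectures on Polytopes}. There is no proof in the paper to compare it with: the paper quotes the result from Balinski without proof, and uses it only once, to know that $G(R)$ is $(d-2)$-connected in the exact-connectivity argument.

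Two small remarks on your write-up.
\begin{enumerate}
\item Besides the monotone-path lemma, you rely on the connectivity of $G(F^+)$, so say briefly why this is not circular. Edges of a face of $P$ are edges of $P$. $G(F^+)$ is connected by the same lemma applied inside $F^+$ to a linear functional with a unique maximiser on $F^+$. Alternatively, use induction on dimension, since $\dim F^+<d$.
\item The vertices $u,w$ you introduce are never used and can be dropped. What matters is the choice of $v\in H\setminus X$: in case (a) it is the vertex that links the part of $G(P)-X$ above $H$ to the part below $H$.
\end{enumerate}
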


\subsection{Stellar subdivisions of edges}

Following \cite[Section~2.10]{Pin24}, let $Q\subset\R^d$ be a $d$-polytope
and let $F$ be a facet of $Q$. Denote by $\ints Q$ the interior of $Q$ and by $\aff F$ the affine hull of $F$. The facet $F$ is \emph{visible from} a point
$p$ with respect to $Q$ if $p$ belongs to the open halfspace determined by
$\aff F$ that is disjoint from $Q$. If instead $p$ belongs to the open
halfspace determined by $\aff F$ that contains $\ints Q$, then $F$ is
\emph{nonvisible from} $p$. Moreover, the point $p$ is \emph{beyond} a face $J$ of $Q$ if the facets of $Q$ containing $J$ are precisely those that are visible from $p$ and the facets of $Q$ not containing $J$ are all nonvisible from $p$. Finally,
$p$ is in \emph{general position} with respect to $Q$ if it lies on no facet
hyperplane of $Q$.

We need \cite[Theorem~2.10.1]{Pin24}, which can also  be found in \cite[Thm.~5.2.1]{Gru03}.

\begin{theorem}[Inductive construction of polytopes, {\cite[Theorem~2.10.1]{Pin24}}] 
\label{thm:construction} Let $Q$ and $Q'$ be two $d$-polytopes in $\mathbb{R}^d$, and let $v'$ be a vertex of $Q'$ such that $v'\not\in Q$ and $Q'=\conv (Q\cup \{v'\})$. Then the following hold:
\begin{enumerate}[{\rm (i)}]
\item A face $F$ of $Q$ is a face of $Q'$ if and only if there exists a facet  of $Q$ containing $F$ that is nonvisible from $v'$ with respect to $Q$.

\item  If  $F$ is a face of $Q$ with $v'\in \aff F$, then $F':=\conv(F\cup\{v'\})$ is a face of $Q'$.

\item   If  $F$ is a face of $Q$ such that, among the facets of $Q$ containing $F$, there is at least one that is visible from $v' $ (with respect to $Q$) and at least one that is nonvisible (with respect to $Q$), then \begin{equation*}
	F':=\conv(F\cup\{v'\})
\end{equation*} 
 is a face of $Q'$. 

\item For each face $F'$ of $Q'$, there is a face $F$ of $Q$ for which (i), (ii), or (iii) applies. In other words, each face of $Q'$ falls precisely in one of the above cases. 
\end{enumerate}
  
\end{theorem}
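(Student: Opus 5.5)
The plan is to work with supporting hyperplanes and normal cones. For a face $F$ of $Q$ let $N_F$ be the cone generated by the outer normals $a_G$ of the facets $G\supseteq F$ of $Q$ (with $G=\{x:a_G\cdot x=b_G\}\cap Q$). A normal $a$ in the relative interior of $N_F$ cuts out exactly $F$. Fix $x_0\in F$ and define the linear functional $g(a):=a\cdot(v'-x_0)$ on $N_F$. Then $g(a_G)>0$ exactly when $G$ is visible from $v'$, $g(a_G)<0$ exactly when $G$ is nonvisible, and $g\equiv0$ on $N_F$ exactly when $v'\in\aff F$. Every face of $Q'$ is cut out by some $a$. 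Moreover, $Q'=\conv(Q\cup\{v'\})$ lies in $\{a\cdot x\le \beta\}$ iff both $Q$ and $v'$ do. All three items reduce to sign bookkeeping for $g$ on $N_F$.

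\emph{Item (i) and faces of $Q'$ not containing $v'$.} Suppose $F\subseteq G$ with $G$ a facet of $Q$ nonvisible from $v'$. Then $v'$ lies strictly inside the halfspace of $\aff G$ containing $Q$, so $\aff G$ supports $Q'$ with $Q'\cap\aff G=G$. Thus $G$ is a face of $Q'$, and $F$, being a face of $G$, is a face of $Q'$.

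Conversely, let $F$ be a face of $Q$ that is also a face of $Q'$. Then $v'\notin F$. Since $F$ is the intersection of the facets of $Q'$ containing it, some facet of $Q'$ containing $F$ avoids $v'$. Let $H$ be the hyperplane cutting out that facet. Then $H\cap Q'=H\cap Q$ is a facet of $Q$, with $v'$ strictly on the $Q$-side, that is, a nonvisible facet containing $F$. The same argument shows that every face of $Q'$ avoiding $v'$ is a face of $Q$ lying in a nonvisible facet. Its vertices are vertices of $Q$, and its supporting hyperplane also supports $Q$.

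\emph{Items (ii) and (iii).} We seek $a\in\operatorname{relint}N_F$ with $g(a)=0$. The hyperplane $\{x: a\cdot x=a\cdot x_0\}$ then supports $Q$ exactly at $F$ and contains $v'$, so it supports $Q'$ in $\conv(F\cup\{v'\})$.
\begin{itemize}
\item In case (ii), $g$ vanishes on all of $N_F$, so any relative interior point works.
\item In case (iii), take $a=\sum_G\lambda_Ga_G$ with all $\lambda_G>0$. Because $g(a_G)$ takes both signs, we can rescale the weights on the visible facets so that $g(a)=0$, and $a$ remains in the relative interior.
\end{itemize}

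\emph{Item (iv).} Let $F'$ be a face of $Q'$ containing $v'$, cut out by $H=\{a\cdot x=\beta\}$. Then $H$ supports $Q$, and $F:=H\cap Q$ is a face of $Q$. The vertices of $F'$ other than $v'$ are vertices of $Q$ lying in $H$, so $F'=\conv(F\cup\{v'\})$. We have $a\in N_F$ and $g(a)=0$.
\begin{itemize}
\item If $v'\in\aff F$, we are in case (ii).
\item Otherwise, $g$ does not vanish identically on $N_F$. Writing $a$ as a nonnegative combination of the $a_G$, the relation $g(a)=0$ forces some $a_G$ with $g(a_G)>0$ and some with $g(a_G)<0$, unless only zero values occur. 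This gives case (iii).
\end{itemize}
The cases are mutually exclusive: (i) concerns faces not containing $v'$, while (ii) and (iii) are separated by whether $v'\in\aff F$.

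\emph{Main obstacle.} The delicate point is in (iv): facets $G\supseteq F$ with $g(a_G)=0$, that is, $v'\in\aff G$, are neither visible nor nonvisible in the paper's strict sense. One must rule out that such degenerate facets alone produce $g(a)=0$ while $v'\notin\aff F$. I would first try to show that if $a$ is a combination of facet normals with $g(a_G)=0$ only, then $H\cap Q\subseteq\bigcap G$. I would then pass to the minimal face $F$ actually cut out by $H$, and argue, via the relative interior choice, that $v'\in\aff F$ in this situation. This reduces the degenerate case to (ii). If this fails, I would fall back on perturbing $a$ within $N_F$.
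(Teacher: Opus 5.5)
The paper does not prove this statement; it quotes it from \cite[Theorem~2.10.1]{Pin24} (see also \cite[Thm.~5.2.1]{Gru03}), so your argument has to stand on its own. Your treatment of (i), of (ii)/(iii), and of the faces of $Q'$ avoiding $v'$ is sound. The part of (iv) dealing with faces that contain $v'$ has a genuine gap, exactly where you flag it, and the repair you sketch would fail. Knowing only $a\in N_F$ and taking \emph{some} nonnegative representation $a=\sum\lambda_Ga_G$ gives no control over which facets receive positive weight. Your claim that a representation using only facets with $g(a_G)=0$ forces $v'\in\aff F$ is false. Take $Q=\conv\{(\pm1,\pm1,0),(0,0,1)\}$ and $v'=(0,1,1)$. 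Then $v'$ lies on the planes $x+z=1$ and $-x+z=1$, and $a=(1,0,1)+(-1,0,1)=(0,0,2)$ cuts out exactly the apex $F=\{(0,0,1)\}$. The plane $z=1$ supports $Q'$ in the edge $\conv(F\cup\{v'\})$, yet $v'\notin\aff F$. This face is of type (iii), witnessed by the facets $y+z=1$ (visible) and $-y+z=1$ (nonvisible), which do not occur in the representation you started from.

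The missing step is the converse of the fact you already use in (ii)/(iii). Because $H\cap Q$ is exactly $F$, the normal $a$ lies in $\operatorname{relint}N_F$. Hence it can be written as $a=\sum_{G\supseteq F}\lambda_Ga_G$ with \emph{every} $\lambda_G>0$; in the example, $a$ is half the sum of all four lateral normals. Suppose $v'\notin\aff F$. Your own observation that $g\equiv0$ on $N_F$ only when $v'\in\aff F$ then gives a generator with $g(a_G)\ne0$. Since $0=g(a)=\sum\lambda_Gg(a_G)$ with all $\lambda_G>0$, there must be both a positive and a negative term, which is case (iii). No perturbation of $a$ and no passage to a smaller face is needed.

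Separately, your setup needs a point $x_0\in F$ and a nonzero $a$, so it does not cover $F'=Q'$ or $F'=\{v'\}$ (where $F=H\cap Q=\emptyset$). The case $F'=Q'$ is trivial. The case $F'=\{v'\}$ is case (iii) with $F=\emptyset$, and it requires showing that some facet of $Q$ is nonvisible from $v'$. Any facet containing the point where the ray from $v'$ through an interior point of $Q$ leaves $Q$ works. That same fact is what makes (i) true for $F=\emptyset$.
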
 

We require the stellar subdivision of an edge, which is the edge case
of the geometric stellar subdivision of Ewald and
Shephard~\cite[p.~10, Statement~(4)]{EwaShe74}. Let $J$ be a proper face of a simplicial $d$-polytope
$Q\subset \R^d$, and let $z\in \R^d$ be a point beyond $J$. Then we say that the 
polytope $Q':=\conv(Q\cup\{z\})$ is obtained from $Q$ by the \emph{stellar subdivision  of $Q$ with respect to $J$}. The next lemma gives the precise facet and graph
descriptions that we need.

\begin{lemma}[Stellar subdivision of an edge]\label{lem:stellar-edge}
Let $d\ge2$, let $Q\subset \R^d$ be a simplicial $d$-polytope, and let $[u,v]$ be an edge  of $Q$. Suppose that $z\in \R^d$ is a point beyond $[u,v]$ and $Q':=\conv(Q\cup\{z\})\subset \R^d$ is the stellar subdivision of  $Q$   with respect to $[u,v]$. Then the following hold. 
\begin{enumerate}[{\rm (i)}]
\item The
polytope $Q'$ has vertex set
$\V(Q)\cup\{z\}$. 
\item The facets of $Q'$ are all simplices obtained by retaining every facet $F$ of
$Q$ not containing $[u,v]$ and replacing each facet $F$ containing
$[u,v]$ by
\[
 \conv\bigl((\V(F)\setminus\{v\})\cup\{z\}\bigr)
 \quad\text{and}\quad
 \conv\bigl((\V(F)\setminus\{u\})\cup\{z\}\bigr).
\]

\item The subgraph of $G(Q')$ induced by $\V(Q)$ is obtained from $G(Q)$ by
deleting the edge $uv$.   The neighbours of $z$  are exactly
the vertices that lie in a facet of $Q$ 
containing $[u,v]$.
\end{enumerate}
\end{lemma}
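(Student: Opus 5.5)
The plan is to prove (ii) first, using Theorem~\ref{thm:construction}, and then read off (i) and (iii) from the facet list, since every facet of $Q'$ will turn out to be a simplex.

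\textbf{Visibility.} Since $z$ is beyond $[u,v]$, the facets of $Q$ visible from $z$ are exactly those containing $[u,v]$, and every other facet is nonvisible. In particular $z$ lies on no facet hyperplane of $Q$, because each facet is either visible or nonvisible, hence strictly on one side of its hyperplane. Also $z\notin Q$, since $Q$ has at least one visible facet.

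\textbf{Facets of $Q'$.} I would classify the facets of $Q'$ with Theorem~\ref{thm:construction}(iv); each one is of type (i), (ii) or (iii).
\begin{enumerate}
\item[] \emph{Type (i).} A facet $F$ of $Q$ is a facet of $Q'$ if and only if some facet of $Q$ containing $F$ is nonvisible. The only facet of $Q$ containing $F$ is $F$ itself, so $F$ survives exactly when $[u,v]\not\subseteq F$. A lower-dimensional face of $Q$ that survives is not a facet of $Q'$.
\item[] \emph{Type (ii).} A facet of this form is $\conv(F\cup\{z\})$ with $z\in\aff F$. It has dimension $d-1$, and since $z\notin F$ this forces $\dim F=d-1$. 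Then $z$ lies on a facet hyperplane of $Q$, which was excluded above, so this type does not occur.
\item[] \emph{Type (iii).} Here $F$ must be a ridge of $Q$ such that, of the two facets of $Q$ containing it, exactly one is visible. Let $F_1$ be that visible facet; it is a simplex containing $u$ and $v$, and $F=\conv(\V(F_1)\setminus\{w\})$ for some vertex $w$ of $F_1$.
\begin{enumerate}
\item[] If $w\notin\{u,v\}$, then $F\supseteq[u,v]$, so both facets containing $F$ are visible and $F$ does not qualify.
\item[] If $w=v$, then the second facet through $F$ does not contain $v$, because it differs from $F_1$ in one vertex. Hence it is nonvisible, and we obtain the facet $\conv\bigl((\V(F_1)\setminus\{v\})\cup\{z\}\bigr)$.
\item[] The case $w=u$ is symmetric and gives $\conv\bigl((\V(F_1)\setminus\{u\})\cup\{z\}\bigr)$.
\end{enumerate}
\end{enumerate}
Each new facet is a simplex, since it is a $(d-2)$-simplex together with a point off its affine hull. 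This proves (ii).

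\textbf{Vertices.} For (i), note first that $\V(Q')\subseteq\V(Q)\cup\{z\}$ because $Q'=\conv(Q\cup\{z\})$. Conversely, $z$ is a vertex because it lies in a new facet. Every other vertex of $Q$ lies in some facet of $Q'$: either in a retained facet, or, if it lies only in facets containing $[u,v]$, then in one of the two new facets built from such a facet, which together contain all of $\V(F)$. A vertex of a simplex facet is a vertex of the polytope, so every vertex of $Q$ is a vertex of $Q'$.

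\textbf{Graph.} Since $Q'$ is simplicial, two of its vertices are adjacent exactly when some facet of $Q'$ contains both. Take a pair $\{w,x\}\ne\{u,v\}$ that is an edge of $Q$.
\begin{enumerate}
\item[] If some facet of $Q$ containing $w,x$ avoids $[u,v]$, that facet is retained, so $w$ and $x$ stay adjacent.
\item[] Otherwise they lie in a visible facet $F$. If $v\notin\{w,x\}$, the new facet $\conv\bigl((\V(F)\setminus\{v\})\cup\{z\}\bigr)$ contains both. If $v\in\{w,x\}$, then $u\notin\{w,x\}$, and the new facet $\conv\bigl((\V(F)\setminus\{u\})\cup\{z\}\bigr)$ contains both.
\end{enumerate}
The pair $\{u,v\}$ lies in no facet of $Q'$: retained facets avoid $[u,v]$, and each new facet omits $u$ or $v$. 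Pairs of vertices of $Q$ not adjacent in $Q$ lie in no facet of $Q$, hence in no facet of $Q'$, since every facet of $Q'$ other than those through $z$ is a facet of $Q$ and the new facets' old vertices come from a single facet of $Q$. Finally, the neighbours of $z$ are the vertices of the new facets, namely $\bigcup_F\bigl((\V(F)\setminus\{v\})\cup(\V(F)\setminus\{u\})\bigr)=\bigcup_F\V(F)$, where $F$ ranges over the facets containing $[u,v]$. This proves (iii).

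The main obstacle is the type~(ii)/(iii) bookkeeping: checking that only ridges obtained by dropping $u$ or $v$ from a visible facet contribute, and that general position rules out type~(ii).
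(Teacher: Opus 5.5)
Your proof is correct and follows the paper's route: you classify the facets of $Q'$ with Theorem~\ref{thm:construction}, obtaining the retained nonvisible facets together with the type-(iii) cones over the two ridges of each visible facet that omit $u$ or $v$, and you read off (i) and (iii) from the fact that $Q'$ is simplicial. You are slightly more careful than the paper in two places: you explicitly exclude type-(ii) faces, and you derive the survival of the vertices and of the edges other than $uv$ from the new facets themselves, whereas the paper relies on the assertion, true but stated without proof, that each such vertex and edge lies in a facet not containing $[u,v]$.
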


\begin{proof}
Translate $Q$ so that $0\in\ints Q$ and take
$x$ in the relative interior of $[u,v]$. For sufficiently small $\eps>0$, the point
$z_0:=(1+\eps)x$ lies beyond $[u,v]$, so such points exist.
Now let $z$ be any point beyond $[u,v]$. By definition, $z$
is in general position with respect to $Q$, and its visible
facets are precisely those containing $[u,v]$.

(ii) A facet of $Q$ not containing $[u,v]$ is nonvisible from $z$, so it is a facet of $Q'$ by Theorem~\ref{thm:construction}(i). Let $F$ be a facet of $Q$ containing $[u,v]$. A ridge of  $F$ containing $[u,v]$  lies in two facets visible  from $z$, so it does not give rise to a facet in $Q'$ (Theorem~\ref{thm:construction}). Each of the other two
ridges of $F$, obtained by omitting $u$ or $v$ from $\V(F)$, is contained in a second facet that does not contain $[u,v]$ and is therefore nonvisible from $z$. Consequently, it gives rise to a facet of $Q'$ by Theorem~\ref{thm:construction}(iii). By Theorem~\ref{thm:construction}(iv), these are all the facets
of $Q'$, all of which are simplices.

(i)--(iii) Every vertex of $Q$ and every edge other than
$[u,v]$ lie in a facet not containing $[u,v]$. These vertices and edges therefore remain
faces of $Q'$ by Theorem~\ref{thm:construction}(i). All facets containing $[u,v]$ are visible, so $[u,v]$ is
not a face of $Q'$. The facet description in (ii) shows that
$z$ is a vertex and that its neighbours are precisely the
vertices of the original facets containing $[u,v]$.
Thus $\V(Q')=\V(Q)\cup\{z\}$, and the asserted graph
description follows.
\end{proof}

\begin{remark}[Repeated stellar subdivisions]\label{rmk:repeated-stellar-subdivisions}
Let $Q$ be a simplicial $d$-polytope, with $d\ge2$, and let
$f:=[u,v]$ be an edge of $Q$.
Let $F_1,\ldots,F_s$ be all the facets of $Q$ containing $f$, 
and put $B_{F_j}:=\V(F_j)\setminus\{u,v\}$ for $1\le j\le s$.
For any integer $\ell\ge1$, successive applications of
Lemma~\ref{lem:stellar-edge} replace $f$ by an induced path
$L_f$ of length $\ell$, with vertices in order
\[
 u=x_0,x_1,\ldots,x_\ell=v.
\]
Let $Q'$ denote the resulting polytope. 

Every original facet not
containing $[u,v]$ is retained in $Q'$. For each $1\le i\le\ell$,
the facets of $Q'$ containing $[x_{i-1},x_i]$ are precisely
\[
 \conv\bigl(B_{F_j}\cup\{x_{i-1},x_i\}\bigr),
 \qquad 1\le j\le s.
\]
Together with the retained facets, these are all the facets of $Q'$. See Figure~\ref{fig:repeated-stellar-subdivisions}. This follows by induction on the number of subdivisions: subdividing
an edge $[x,y]$ of the current path by a new vertex $z$ replaces each
facet with vertex set $B_{F_j}\cup\{x,y\}$ by two facets with vertex
sets $B_{F_j}\cup\{x,z\}$ and $B_{F_j}\cup\{z,y\}$.
Thus the same complementary sets $B_{F_j}$ occur for every edge
of the path; see Figure~\ref{fig:repeated-stellar-subdivisions}(b)--(c). 

Each new vertex $x_i$, $1\le i\le\ell-1$, is
adjacent, outside $L_f$, to exactly the vertices in
$\bigcup_{j=1}^s B_{F_j}$. 

The facet description also shows that
no chords of $L_f$ are introduced.
\end{remark}

\begin{figure}[htbp] 
\begin{center}     
    \includegraphics[scale=0.8]{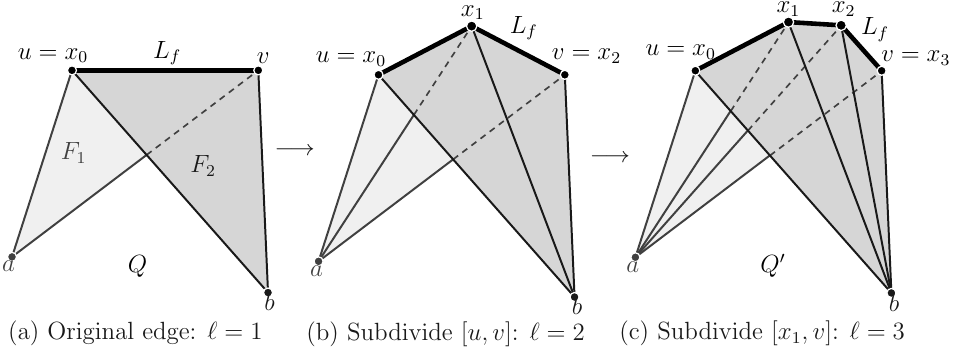}
\end{center}
\caption{Schematic of repeated stellar subdivisions of an edge $f:=[u,v]$ in a simplicial 3-polytope $Q$.  The path $L_f$ is highlighted in bold.  (a)  The edge $f$ is contained in the facets $F_1,F_2$ with $B_{F_1}=\set{a}$  and $B_{F_2}=\set{b}$. (b)--(c)  The edge $[x_{i-1},x_i]$ of $L_f$ lies in the facets  $\conv(B_{F_j}\cup\{x_{i-1},x_i\})$, $j=1,2$.  The same sets $B_{F_1}$ and $B_{F_2}$ occur for every edge of $L_f$.}
\label{fig:repeated-stellar-subdivisions}
\end{figure}

For a  $d$-polytope ($d\ge 2$), \emph{stacking over a facet}  means
adding a point beyond it and taking the convex hull. Successive stackings from a simplex produce a \emph{stacked}
simplicial polytope.   
Stacking once over each original facet of a polytope $Q$
produces its \emph{Kleetope} $\Kl(Q)$. 

\begin{proposition}\label{prop:kleetopes} Let $Q$ be a $d$-polytope with $d\ge 2$. The following hold.
\begin{enumerate}[{\rm (i)}]
    \item If $Q$ is simplicial, then so is its Kleetope  $\Kl(Q)$. 
    \item The added vertices form an independent
set of $G(\Kl(Q))$, and each has precisely the vertices of its corresponding original
facet as neighbours. 
\item If $\dim Q\ge3$, every original edge of $Q$ is
retained in $\Kl(Q)$.
\end{enumerate}
\end{proposition}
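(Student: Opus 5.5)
The plan is to build $\Kl(Q)$ one stacking at a time and track faces with Theorem~\ref{thm:construction}. Let $F_1,\dots,F_m$ be the facets of $Q$, and put $Q_0:=Q$ and $Q_j:=\conv(Q_{j-1}\cup\{p_j\})$. Here $p_j$ is chosen beyond $F_j$ with respect to $Q_{j-1}$. For this choice to make sense I first show by induction that $F_j$ is still a facet of $Q_{j-1}$. Granting that, a point beyond it exists, exactly as in the proof of Lemma~\ref{lem:stellar-edge}: translate so that $0\in\ints Q_{j-1}$ and take $p_j:=(1+\eps)x$ with $x$ in the relative interior of $F_j$ and $\eps>0$ small. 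Such a point sees $F_j$ and no other facet of $Q_{j-1}$. I take this sequential process as the precise meaning of ``stacking once over each original facet''.

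The core is a single-step analysis. Let $R$ be a $d$-polytope, $F$ a facet of $R$, and $p$ a point beyond $F$, so that $F$ is the only facet visible from $p$. By Theorem~\ref{thm:construction}(i), every face of $R$ other than $F$ survives in $\conv(R\cup\{p\})$, because every proper face of $F$ lies in a second, nonvisible facet. Case~(ii) of Theorem~\ref{thm:construction} does not occur, since $p$ is in general position. By (iii) and (iv), the only new faces are $\conv(G\cup\{p\})$ for proper faces $G$ of $F$.

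This single step yields three facts:
\begin{enumerate}[\rm (a)]
\item The new facets are $\conv(G\cup\{p\})$ for ridges $G\subset F$. If $F$ is a simplex, these are simplices, and every other facet is an old facet.
\item The neighbours of $p$ are exactly the vertices of $F$.
\item Every edge of $R$ survives except when $F$ itself is an edge, which happens only when $d=2$.
\end{enumerate}
Fact (a) also supplies the induction claim: for $i>j$, the facet $F_i$ is an old facet of $Q_{j-1}$ different from $F_j$, so it remains a facet of $Q_j$.

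The three assertions now follow by induction on $j$.
\begin{enumerate}[\rm (i)]
\item By (a), each $Q_j$ is simplicial whenever $Q$ is.
\item When $p_j$ is added, its neighbours are the vertices of $F_j$, which are original vertices of $Q$. Since every later step only adds edges at the new vertex, no edge $p_ip_j$ is ever created, so the added vertices are independent. Moreover, by (c), once an edge $p_jw$ exists it is never destroyed, because for $d\ge3$ no facet is an edge. Hence the neighbourhood of $p_j$ in $\Kl(Q)$ is exactly $\V(F_j)$. For $d=2$ no edge at an added vertex is destroyed either, since a later step only removes the edge $F_i$ of $Q$, and $p_j\notin F_i$.
\item For $\dim Q\ge3$, (c) shows that every original edge survives every step.
\end{enumerate}

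The main point needing care is well-definedness. After earlier stackings, later stacking points must be beyond their facet with respect to the current polytope, not merely with respect to $Q$. The sequential choice above handles this directly. If a simultaneous choice of all the $p_j$ is preferred, I would take each $p_j$ very close to the barycenter of $F_j$. Then the new facet hyperplanes created at $p_i$ converge to $\aff F_i$, and $p_j$ lies strictly on the $\ints Q$ side of $\aff F_i$, so it stays nonvisible.
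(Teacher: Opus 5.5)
Your proof is correct. The paper states Proposition~\ref{prop:kleetopes} without proof and treats it as a standard fact, so there is no proof of its own to compare against. Your argument realizes $\Kl(Q)$ by successive stackings and uses Theorem~\ref{thm:construction} to check two things at each step: only the stacked facet $F_j$ is destroyed, and the new faces are exactly $\conv(G\cup\{p_j\})$ for proper faces $G$ of $F_j$. This is the same technique the paper uses to prove Lemma~\ref{lem:stellar-edge}, so your proof supplies the omitted argument within the paper's own framework.

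One small simplification is available. The only face lost at step $i$ is the original facet $F_i$, which contains no added vertex. So the survival of the edges $p_jw$ needs no case split on $d$.
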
 

\section{Proof of Theorem~\ref{thm:main}}
\label{sec:main}

Inspired by the Kleetope obstruction in \cite[Sec.~17.1]{Gru03},
we form the Kleetope of a direct sum and subdivide edges of one
factor to increase the added vertices' degrees while preserving
their independence. The remaining vertices induce the join of
the graph of a stacked $(d-2)$-polytope and a cycle, simplifying
the connectivity proof.

\textbf{The construction.}
Fix $d\ge4$, $r\ge d$, and $A\ge3$, and put $N:=A(r-d+1)$, as in the
statement. In particular, $N\ge3(r-d+1)$. Let $R$ be a stacked simplicial
$(d-2)$-polytope obtained from a $(d-2)$-simplex by $N-3$ stackings.
Each stacking adds one vertex and replaces one facet by $d-2$ facets.
Consequently,
\begin{equation}\label{eq:stacked-counts}
 \begin{aligned}
  \card{\V(R)}&=(d-1)+(N-3)=N+d-4,\\
  f_{d-3}(R)&=(d-1)+(N-3)(d-3)
                  =(d-3)N-2(d-4).
 \end{aligned}
\end{equation}
For $d=4$, the polytope $R$ is simply an $N$-gon.

Let $D$ be an $A$-gon. Translate $R$ and $D$ so that the origin lies in
their respective interiors, and form
\[
 Q:=R\oplus D,\qquad K:=\Kl(Q).
\]
We identify $R$ with $R\times\{0\}$ and $D$ with
$\{0\}\times D$ in $\R^{d-2}\times\R^2$. By
Proposition~\ref{prop:direct-sum-basic}, $Q$ is a simplicial
$d$-polytope whose facets are precisely
\[
 F*f=\conv(F\cup f),
 \qquad F\text{ a facet of }R,\quad f\in\E(D).
\]
Here $F*f$ denotes the free join of $F$ and $f$ \cite[Sec.~2.6]{Pin24}. 
For every such facet $F*f$, let $p_{F,f}$ be the vertex added in forming $K$,
and put
\[
 T:=\set*{p_{F,f}:F\text{ a facet of }R,\ f\in\E(D)}.
\]
By Proposition~\ref{prop:kleetopes}, $K$ is simplicial (since $Q$ is simplicial), retains
every edge of $Q$ (since $\dim Q\ge 4$), and has $T$ as an independent set. Each $p_{F,f}$
has precisely the vertices of $F*f$ as neighbours. Moreover,
\begin{equation}\label{eq:T-count}
 \card T=A f_{d-3}(R)=A\bigl((d-3)N-2(d-4)\bigr).
\end{equation}

For each edge $f$ of $D$, regarded as an edge of $K$,
use successive stellar subdivisions to replace $f$ by
a path $L_f$ of $r-d+1$ edges; see Lemma~\ref{lem:stellar-edge} and Remark~\ref{rmk:repeated-stellar-subdivisions}.  This introduces $r-d$
vertices for each $f$. When $r=d$, no subdivisions are made and $L_f$
is the original edge. Let $P$ be the resulting simplicial $d$-polytope.

Put $U:=\V(R)$, let $W$ consist of the original vertices of $D$ and
all subdivision vertices, and set $S:=U\cup W$. By
Lemma~\ref{lem:stellar-edge}, after each subdivision, every old vertex is retained, so
\[
 \V(P)=S\cup T,\qquad S\cap T=\emptyset.
\]

\textbf{The effect of the subdivisions.}
Fix a facet $F$ of $R$ and an edge $f=[u,v]$ of $D$. Write the vertices of $L_f$ in order as
$u=x_0,x_1,\ldots,x_\ell=v$, where $\ell=r-d+1$. 
The facets of $K$ arising from $F*f$ are
\begin{equation}\label{eq:kleetope-facets}
 \begin{gathered}
  \conv\paren*{\V(F)\cup\set*{p_{F,f},u}},\qquad
  \conv\paren*{\V(F)\cup\set*{p_{F,f},v}},\\
  \conv\paren*{(\V(F)\setminus\set*{w})
                 \cup\set*{p_{F,f},u,v}},
       \quad \text{for each $w\in\V(F)$}.
 \end{gathered}
\end{equation}
In particular, each facet of $K$ contains at most one edge of $D$.
Subdividing $f$ changes only facets containing $f$, and no replacement
facet contains another edge of $D$. Each facet produced by further subdivisions of the edges
replacing $f$ contains at most one vertex of $D$; any such
vertex is $u$ or $v$. Thus the facets containing any other edge of $D$ are unchanged,
and subdivisions associated with distinct edges of $D$ do not affect
one another.

The first two facets in~\eqref{eq:kleetope-facets} remain facets of
$P$. By Remark~\ref{rmk:repeated-stellar-subdivisions}, the remaining facets in that display are
replaced by
\begin{equation}\label{eq:subdivided-facets}
\conv\bigl((\V(F)\setminus\{w\})
                 \cup\{p_{F,f},x_{i-1},x_i\}\bigr),
 \qquad w\in\V(F),\quad 1\le i\le \ell.
\end{equation}
Together with the first two facets in~\eqref{eq:kleetope-facets},
these are all the facets of $P$ as $F$ and $f$ vary. See Figure~\ref{fig:P-construction} for an example with $d=4$ and $r=5$.

\begin{figure}[htbp] 
\begin{center}     
    \includegraphics[scale=0.8]{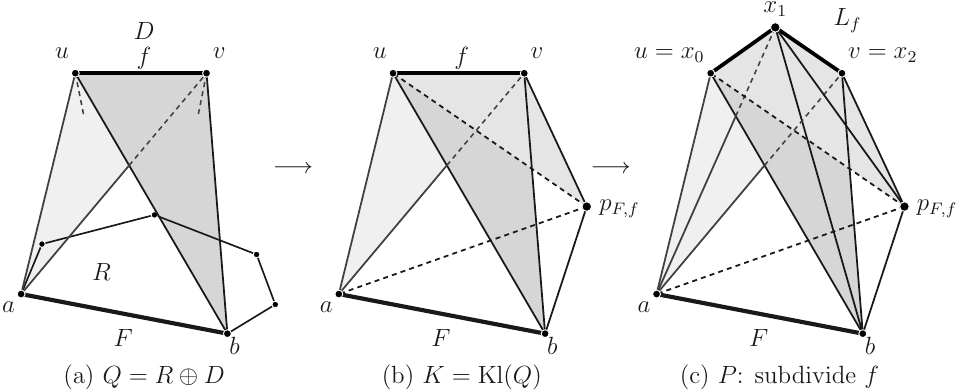}
\end{center}
\caption{Schematic of the construction for $d=4$ and
$r=5$: $R$ is a polygon, and only one edge of $D$ is depicted.
(a) The direct sum $Q=R\oplus D$, with $F=[a,b]$ and $f=[u,v]$.
(b) Stacking over $F*f$ introduces the vertex $p_{F,f}$.
(c) Subdividing $f$ replaces it by the path
$L_f:u=x_0,x_1,x_2=v$.
Only the part associated with $F*f$ is shown
in (b) and (c).}
\label{fig:P-construction}
\end{figure}

It follows that the neighbours of $p_{F,f}$ in $G(P)$ are precisely
the vertices in $\V(F)\cup\V(L_f)$. These two sets are disjoint, so
\begin{equation}\label{eq:degree-T}
 \deg_{G(P)}(p_{F,f})
   =(d-2)+\card{\V(L_f)}
   =(d-2)+(r-d+2)=r.
\end{equation}
No listed facet contains two vertices of $T$, so $T$ remains
independent, and Part (ii) is settled.

\textbf{The graph induced by $S$}. The graph induced by $U$
is $G(R)$: it has this form in $Q$ by
Proposition~\ref{prop:direct-sum-basic}, and neither forming $K$ nor
subdividing the edges in question changes any edge with both ends in
$U$. The graph induced by $W$ is the cycle obtained from the boundary
of $D$ by replacing each edge $f$ by $L_f$. Indeed, no chords of any
$L_f$ are introduced, and the facet descriptions above give no further
edges between vertices of $W$; see Remark~\ref{rmk:repeated-stellar-subdivisions}.

Every original edge between $U$ and $\V(D)$ survives. Also, every
subdivision vertex $x_i$ on $L_f$ is adjacent to every vertex $a\in U$.
To see this, choose a facet $F$ of $R$ containing $a$. The inequality
$d-2\ge2$ gives a vertex $w\in\V(F)\setminus\{a\}$. For each
vertex $x_i$ on $L_f$, a facet in~\eqref{eq:subdivided-facets}
with this choice of $w$ contains both $x_i$ and $a$. Because \(P\) is simplicial, they are adjacent. 
Consequently, $G(P)[S]$ consists of $G(R)$ and a cycle, with all
possible edges between their vertex sets. The counts are
\begin{equation}\label{eq:S-count}
 \begin{aligned}
  \card U&=N+d-4,\quad
  \card W=A+A(r-d)=N,\quad
  \card S&=2N+d-4.
 \end{aligned}
\end{equation}
Combining~\eqref{eq:T-count} and~\eqref{eq:S-count} gives
\[
 \card{\V(P)}
  =2N+d-4+A\bigl((d-3)N-2(d-4)\bigr)
  =\bigl(A(d-3)+2\bigr)N-(2A-1)(d-4).
\]

(i) \textbf{Exact $r$-connectivity.}
Write $G:=G(P)$. Our parameter choices give
\begin{equation}\label{eq:core-size-bounds}
 \begin{aligned}
  \card U&=N+d-4
     \ge3(r-d+1)+d-4
      =r-1+2(r-d)\ge r-1,\\
  \card W&=N=A(r-d+1)\ge 3(r-d+1)\ge r-d+3.
 \end{aligned}
\end{equation}
In particular, 
\begin{equation} \label{eq:G-size-bounds}
    \card{\V(G)}\ge\card S\ge(r-1)+(r-d+3)
=2r-d+2\ge r+2>r.
\end{equation}

Let $X\subseteq\V(G)$ satisfy $\card X\le r-1$. We  prove that
$G[S\setminus X]$ is nonempty and connected. If both $U\setminus X$ and $W\setminus X$ are nonempty, then every
vertex of one set is adjacent to every vertex of the other, so
$G[S\setminus X]$ is connected. If $U\subseteq X$, then
$\card U\ge r-1$ in \eqref{eq:core-size-bounds} forces $X=U$, and the cycle on $W$ is intact.
Finally, suppose that $W\subseteq X$. At most
\[
 \card X-\card W\le r-1-(r-d+3)\le d-4
\]
vertices of $R$ have been deleted. The graph $G(R)$ is
$(d-2)$-connected by Theorem~\ref{thm:balinski}, so
$G[U\setminus X]$ is nonempty and connected.
These cases exhaust all possibilities, because
$\card S>r-1\ge\card X$.

Every vertex of $T\setminus X$ has $r$ neighbours, all in $S$, by
\eqref{eq:degree-T}. At most $r-1$ of these neighbours belong to $X$,
so it has a neighbour in $S\setminus X$. Hence $G-X$ is connected,
proving that $G$ is $r$-connected. Conversely, deleting the $r$ neighbours of any vertex of $T$
isolates it. Since $\card{\V(G)}\ge r+2$ (see \eqref{eq:G-size-bounds}), another vertex remains,
so the resulting graph is disconnected.  Thus $G$ has 
connectivity exactly $r$.

(iii) \textbf{Nontraceability and toughness.}
By~\eqref{eq:stacked-counts},
\[
 f_{d-3}(R)-\card U=(d-4)(N-3)\ge0.
\]
Also $\card W=N\le\card U$, so
\[
 \card S=\card U+\card W\le2\card U\le2f_{d-3}(R),\qquad
 \card T=A f_{d-3}(R).
\]
Deleting $S$ leaves precisely the vertices of $T$ as isolated vertices.
It follows that
\[
 c(G-S)-\card S
  =\card T-\card S
  \ge(A-2)f_{d-3}(R)\ge f_{d-3}(R)\ge d-1\ge 3.
\]
Lemma~\ref{lem:k-walk-tree-toughness} implies that $G$ is nontraceable,
and hence nonhamiltonian. Finally, $c(G-S)=\card T>1$, so
\[
 \tau(G)\le\frac{\card S}{c(G-S)}=\frac{\card S}{\card T}\le \frac{2f_{d-3}(R)}{Af_{d-3}(R)}=\frac{2}{A}.
\]
This completes the proof of the theorem. 

\section{Declaration on the use of generative AI and AI-assisted technologies}
\label{subsec:acknowledments-ai} 

The author used OpenAI's GPT-6 Astra~\cite{OpenAIChatGPT}
to explore research strategies, assist with proofreading,
refine the presentation of proofs, and generate initial figures.
He critically reviewed and, where needed, edited all AI-assisted
material, independently verified the mathematics and figures,
and takes full responsibility for the manuscript's content
and correctness.

\appendix
\section{Hamiltonicity of the dual graphs}
\label{app:dual-hamiltonian}

\begin{proposition}\label{prop:dual-hamiltonian}
Every polytope constructed in the proof of
Theorem~\ref{thm:main} has a hamiltonian dual graph.
\end{proposition}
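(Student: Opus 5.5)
The dual graph of $P$ has the facets of $P$ as vertices, with two facets adjacent when they share a ridge. I will build a hamiltonian cycle explicitly, organised by the product structure $F\times f$. The facets of $P$ split into blocks $\mathcal B_{F,f}$, one for each facet $F$ of $R$ and each edge $f$ of $D$. By \eqref{eq:kleetope-facets} and \eqref{eq:subdivided-facets}, the block $\mathcal B_{F,f}$ consists of two kinds of facets. First, there are the two "end" facets $E_u:=\conv(\V(F)\cup\{p,u\})$ and $E_v:=\conv(\V(F)\cup\{p,v\})$, where $p=p_{F,f}$. Second, there are the facets $\Phi_{w,i}:=\conv((\V(F)\setminus\{w\})\cup\{p,x_{i-1},x_i\})$ for $w\in\V(F)$ and $1\le i\le\ell$.

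\textbf{Step 1: adjacencies inside a block.} Two facets are dual-adjacent exactly when they share $d-1$ vertices. Inside $\mathcal B_{F,f}$ this gives three kinds of adjacency:
\begin{itemize}
\item $\Phi_{w,i}\sim\Phi_{w',i}$ for $w\ne w'$;
\item $\Phi_{w,i}\sim\Phi_{w,i+1}$;
\item $E_u\sim\Phi_{w,1}$ and $E_v\sim\Phi_{w,\ell}$ for every $w$.
\end{itemize}
So the block graph is $K_{d-1}\cart P_\ell$ with $E_u$ attached to the whole first layer and $E_v$ to the whole last layer. Since $d-1\ge3$, I will find a hamiltonian path of this block from $E_u$ to $E_v$. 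Traverse layer $1$ completely, entering at some $w_1$ and leaving at some $w_1'$. Step to layer $2$ at $w_1'$, traverse layer $2$ and leave at a different vertex, and continue in this way. Finally exit layer $\ell$ to $E_v$. This works because each layer is a complete graph on at least $3$ vertices, so a hamiltonian path through it can start and end at any prescribed distinct pair.

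\textbf{Step 2: adjacencies between blocks.} Facets in different blocks share ridges only through ridges that avoid $p$. The ridge $\V(F)\cup\{u\}$ of $E_u$ lies in the end facet of the block $(F,f')$, where $f'$ is the other edge of $D$ at $u$. The ridge $(\V(F)\setminus\{w\})\cup\{x_{i-1},x_i\}$ of $\Phi_{w,i}$ lies in $\Phi_{w^*,i}$ of the block $(F',f)$, where $F'$ is the facet of $R$ adjacent to $F$ across the ridge $\V(F)\setminus\{w\}$ and $w^*$ is the vertex of $F'$ opposite that ridge. Thus there are two kinds of link:
\begin{itemize}
\item moving around $D$ for fixed $F$, which links end facets;
\item moving across the dual graph of $R$ for fixed $f$, which links inner facets in the same layer.
\end{itemize}

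\textbf{Step 3: assembling the cycle.} Using only the first kind of link, for each fixed $F$ I concatenate the block paths around the $A$-gon: $E_v$ of block $(F,[u,v])$ is adjacent to $E_v$ of the next block. The problem is that each path runs from $E_u$ to $E_v$, while the next block must be entered at its $u$-side. I will fix this by orienting the edges of $D$ cyclically, so that each block is entered at the end facet shared with the previous block. This yields, for each facet $F$ of $R$, a hamiltonian cycle $C_F$ on $\bigcup_f\mathcal B_{F,f}$.

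It remains to merge the cycles $C_F$ along a spanning tree of the dual graph of $R$. For $R$ stacked, this dual graph is a tree, or just the $N$-cycle when $d=4$. To merge $C_F$ and $C_{F'}$ for adjacent $F,F'$, I choose one block $(F,f)$ and reroute. Inside layer $1$ of $\mathcal B_{F,f}$ I arrange the path so that it uses an edge $\Phi_{w,1}\Phi_{w'',1}$ with $w$ opposite the common ridge. Symmetrically, in $\mathcal B_{F',f}$ I make the path use $\Phi_{w^*,1}\Phi_{y,1}$. I then perform the standard $2$-switch: delete both of these edges and add $\Phi_{w,1}\Phi_{w^*,1}$ and $\Phi_{w'',1}\Phi_{y,1}$. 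The second added edge needs a check, and I may instead need the layer pairs $i$ and $i+1$, or two consecutive blocks.

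\textbf{Main obstacle.} The hard part is guaranteeing that these merges can be performed simultaneously at every tree edge of the dual graph of $R$. A facet $F$ of $R$ may have up to $d-2$ tree neighbours, and each merge consumes a distinct dual edge inside some block of $C_F$. I would handle this by assigning the merges at $F$ to different edges $f$ of $D$, or to different layers. I also expect to adjust the rerouting pattern for the case $\ell=1$ ($r=d$), where each block has only one layer.
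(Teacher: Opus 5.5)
\textbf{Verdict: genuine gap.} Your Steps~1 and~2, and the cycles $C_F$, are essentially right. The gap is in merging the $C_F$, and the mechanism you propose cannot work in general.

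There are two minor slips first. A facet $F$ of $R$ has $d-2$ vertices, so each layer is $K_{d-2}$ (an edge when $d=4$), not $K_{d-1}$. This is harmless, since any two distinct vertices of $K_m$, $m\ge2$, are the ends of a hamiltonian path. You also missed the adjacency $E_u\sim E_v$ across the ridge $\conv(\V(F)\cup\{p\})$.

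\textbf{The merge step.}
\begin{enumerate}
\item Your 2-switch adds $\Phi^{F,f}_{w'',1}\Phi^{F',f}_{y,1}$, which is never a dual edge. It would need $\V(F)\setminus\{w''\}=\V(F')\setminus\{y\}$, but $F$ and $F'$ already share the ridge opposite $w\ne w''$, and two distinct facets of $R$ cannot share two ridges.
\item By your own Step~2, the only dual edges between $\bigcup_g\mathcal B_{F,g}$ and $\bigcup_g\mathcal B_{F',g}$ are $\Phi^{F,g}_{w,i}\Phi^{F',g}_{w^*,i}$. So the only parallel pairs usable in a 2-switch are the matching edges $\Phi^{F,g}_{w,i}\Phi^{F,g}_{w,i+1}$ and their counterparts in $\mathcal B_{F',g}$.
\item When $\ell=1$, that is $r=d$, there are no such pairs. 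The $A$ cross edges form a matching whose ends $\Phi^{F,g}_{w,1}$, $g\in\E(D)$, are pairwise nonadjacent. End facets have no neighbours outside the blocks of their own $F$, so using consecutive blocks does not help either. Hence no 2-switch of $C_F$ and $C_{F'}$ exists at all, and a different splice is needed.
\item Even with a valid splice, you still have to make the splices compatible at every tree edge, and you leave this open.
\item $G(R^*)$ is never a tree: it is $(d-2)$-regular and $(d-2)$-connected. For $\ell\ge2$ your scheme can be rescued by merging along a hamiltonian path of $G(R^*)$ and using layers $1,2$ of two alternating edges of $D$. That rescue already needs $G(R^*)$ to be hamiltonian.
\end{enumerate}

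\textbf{How the paper proceeds.} It transposes your organisation and uses the two ingredients your plan lacks: the edge $E_uE_v$ (its $t_{F,f}h_{F,f}$) and a hamiltonian cycle $C$ of $G(R^*)$. The cycle $C$ exists because $R^*$ arises from a simplex by vertex truncations. Each truncation is absorbed by replacing the truncated vertex with a spanning path of the new simplex facet.

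For a fixed edge $f$ of $D$, oriented from $u$ to $v$, the paper strings the blocks $\mathcal B_{F,f}$ along $C$.
\begin{itemize}
\item Each block is entered and left in layer~$1$, at $\Phi_{w_-,1}$ and $\Phi_{w_+,1}$. Here $w_-$ and $w_+$ are the vertices of $F$ opposite its ridges with its predecessor and successor on $C$.
\item Consecutive blocks are joined by your $\Phi$-links with $i=1$.
\item Inside a block, the spanning path runs through layer~$1$ minus the exit vertex, then layers $2,\dots,\ell$, then $E_v$, $E_u$, and finally the exit vertex.
\end{itemize}
This gives cycles $C_f$, each containing $E_uE_v$ of every block.

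Then fix one facet $F_0$ of $R$. Delete $E_uE_v$ of $\mathcal B_{F_0,f}$ from every $C_f$. Join the end facet at $h(f)$ of $\mathcal B_{F_0,f}$ to the end facet at $t(f')=h(f)$ of $\mathcal B_{F_0,f'}$ by your end-facet link. All splices occur at the single facet $F_0$, so there is no simultaneity problem, and $\ell=1$ needs no separate treatment.
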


\begin{proof}
Let $P$ be a $d$-polytope constructed in Theorem~\ref{thm:main}.
Put $\ell:=r-d+1\ge1$. First, $G(R^*)$ has a hamiltonian cycle.
Since $R$ is stacked, $R^*$ is obtained from a $(d-2)$-simplex
by successive vertex truncations
\cite[Sec.~2.6, Connected sums]{Pin24}. Start with a hamiltonian
cycle of the simplex. At each truncation, replace the truncated
vertex by a spanning path in the new simplex facet between the
new vertices on the two incident edges of the current cycle.

Orient the boundary of $D$ cyclically, and write $t(f)$ and $h(f)$
for the tail and head of each oriented edge $f$, respectively.
For each pair $(F,f)$, let $H_{F,f}$ be the graph of the facet
of $P^*$ conjugate to $p_{F,f}$ of $P$. The first two facets
in~\eqref{eq:kleetope-facets} remain facets of $P$ containing
$p_{F,f}$. For $f=[u,v]$ oriented from $u$ to $v$, we have
$t(f)=u$ and $h(f)=v$. Write $t_{F,f}$ and $h_{F,f}$ for the
dual vertices conjugate, respectively, to the facets
with vertex sets
\[\V(F)\cup\set*{p_{F,f},t(f)},\qquad
\V(F)\cup\set*{p_{F,f},h(f)}.\] For the edge $f$, write
\[
 L_f:=x_0x_1\cdots x_\ell,
 \qquad x_0=t(f),\quad x_\ell=h(f),
\]
for the path in $P$ replacing $f$.
For the \(i\)-th edge \(x_{i-1}x_i\),
\eqref{eq:subdivided-facets} gives the facet
\[
 M_{i,w}:=\conv\bigl((\V(F)\setminus\{w\})
 \cup\{p_{F,f},x_{i-1},x_i\}\bigr),
 \qquad w\in\V(F).
\]
Since \(F\) has \(d-2\) vertices, there are \(d-2\) such facets,
one for each choice of the omitted vertex \(w\).
For each $1\le i\le\ell$, let $J_i$ be the subgraph of
$H_{F,f}$ induced by the dual vertices corresponding to
$M_{i,w}$, where $w\in\V(F)$.
These are precisely the vertices of the $(d-3)$-face of
$P^*$ conjugate to
$\conv\{p_{F,f},x_{i-1},x_i\}$.
This face has $d-2$ vertices and is therefore a simplex,
so $J_i$ is a $(d-2)$-clique.
The cliques $J_1,\ldots,J_\ell$ are ordered along $L_f$
and together contain all vertices of $H_{F,f}$ except
$t_{F,f}$ and $h_{F,f}$.

For $1\le i<\ell$, the edges between $J_i$ and $J_{i+1}$
form a perfect matching pairing vertices with the same
omitted vertex of $F$. Indeed,
\[
 M_{i,w}\cap M_{i+1,w}
 =\conv\bigl((\V(F)\setminus\{w\})
       \cup\{p_{F,f},x_i\}\bigr).
\]
This intersection has $(d-3)+2=d-1$ vertices and is therefore
a ridge of $P$, so the corresponding dual vertices are adjacent.
If $w\ne w'$, the facets $M_{i,w}$ and $M_{i+1,w'}$ share only
$d-2$ vertices, so their dual vertices are not adjacent.

Moreover, from \eqref{eq:kleetope-facets} and \eqref{eq:subdivided-facets} it follows that $t_{F,f}h_{F,f}$ is an edge of $H_{F,f}$,
the vertex $t_{F,f}$ is adjacent to every vertex of $J_1$,
and the vertex $h_{F,f}$ is adjacent to every vertex of $J_\ell$.

For any distinct $x,y\in\V(J_1)$, the graph $H_{F,f}$ has
a spanning $xy$-path containing $t_{F,f}h_{F,f}$.
To obtain one, traverse $J_1-y$ starting at $x$, then traverse
$J_2,\ldots,J_\ell$ in order, using a matching edge between
successive cliques, and finish with $h_{F,f},t_{F,f},y$.
The first traversal is possible since $d-2\ge2$,
and each later clique can be traversed starting at the matching
neighbour of the preceding endpoint.
This also applies when $\ell=1$. See Figure~\ref{fig:dual-hamiltonian-construction}(a). 

Consider a hamiltonian cycle $C$ of $G(R^*)$ and fix an edge
$f$ of $D$. For each facet $F$ of $R$, let $F^-$ and $F^+$
be its predecessor and successor on $C$. Since $F$ is a simplex,
there are distinct vertices $w_-,w_+\in\V(F)$ such that
\[
 \V(F\cap F^-)=\V(F)\setminus\{w_-\},
 \qquad
 \V(F\cap F^+)=\V(F)\setminus\{w_+\}.
\]
The dual vertices conjugate to $M_{1,w_-}$ and $M_{1,w_+}$
are therefore distinct vertices of the first clique $J_1$
of $H_{F,f}$, and thus are adjacent. Orient the spanning path in $H_{F,f}$ described
above from the dual vertex corresponding to $M_{1,w_-}$
to that corresponding to $M_{1,w_+}$.

For this fixed $f$, the chosen spanning paths in the graphs
$H_{F,f}$, one for each facet $F$ of $R$, are vertex-disjoint,
since every facet in the block $H_{F,f}$ contains $p_{F,f}$
and no other stacking vertex.
If $F'$ is the successor of $F$ on $C$, the terminal vertex
of the path in $H_{F,f}$ and the initial vertex of the path in
$H_{F',f}$ correspond to facets whose intersection is the ridge
\[
 \conv\bigl(\V(F\cap F')\cup\{x_0,x_1\}\bigr).
\]
Join these two vertices by their edge in $G(P^*)$. Repeating this around $C$,
including the connection from the last path back to the first,
gives a single cycle $C_f$ spanning all the blocks $H_{F,f}$
for this fixed $f$; see Figure~\ref{fig:dual-hamiltonian-construction}(b). Each spanning path in a block $H_{F,f}$
contains $t_{F,f}h_{F,f}$, so $C_f$ contains this edge
for every $F$.

\begin{figure}[htbp] 
\begin{center}     
    \includegraphics[scale=0.8]{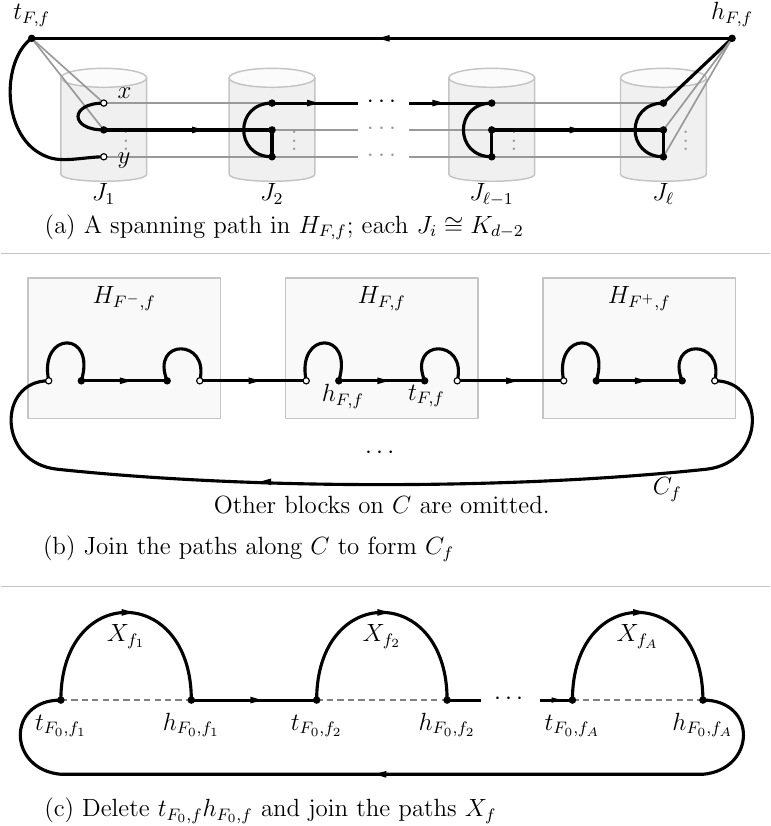}
\end{center}
\caption{Schematic construction of the hamiltonian cycle in $G(P^*)$.
(a) Cylinders represent $(d-2)$-cliques $J_i$, with matchings
between consecutive cliques. The black spanning $xy$-path contains
$t_{F,f}h_{F,f}$.
(b) Joining one such path from each block along $C$ gives $C_f$.
(c) Deleting $t_{F_0,f}h_{F_0,f}$ from each $C_f$ gives the paths $X_f$,
which are joined in the cyclic order of the edges of $D$.}
\label{fig:dual-hamiltonian-construction}
\end{figure}

Finally, fix a facet $F_0$ of $R$.
Deleting $t_{F_0,f}h_{F_0,f}$ from each cycle $C_f$ gives
a path $X_f$ through every vertex of $C_f$, with ends
$t_{F_0,f}$ and $h_{F_0,f}$.
Whenever $f,f'$ are consecutive around the oriented boundary
of $D$, we have $h(f)=t(f')$. By the definitions of
$t_{F,f}$ and $h_{F,f}$, the facets corresponding to
$h_{F_0,f}$ and $t_{F_0,f'}$ are
\[
 \conv\bigl(\V(F_0)\cup\{p_{F_0,f},h(f)\}\bigr),
 \qquad
 \conv\bigl(\V(F_0)\cup\{p_{F_0,f'},t(f')\}\bigr).
\]
Their intersection is
\(
 \conv\bigl(\V(F_0)\cup\{h(f)\}\bigr),
\)
which has $d-2+1=d-1$ vertices and is therefore a ridge of $P$.
Hence $h_{F_0,f}t_{F_0,f'}$ is an edge of $G(P^*)$.

Traverse the path $X_f$ from $t_{F_0,f}$ to $h_{F_0,f}$.
Then take the edge $h_{F_0,f}t_{F_0,f'}$, where $f'$ is
the next edge around the oriented boundary of $D$,
and continue along the path obtained from $C_{f'}$.
Following the cyclic order of the edges of $D$ joins all these
paths into a single cycle; see Figure~\ref{fig:dual-hamiltonian-construction}(c).  The facet descriptions
in~\eqref{eq:kleetope-facets} and~\eqref{eq:subdivided-facets}
show that every dual vertex belongs to exactly one block
$H_{F,f}$. Thus the paths partition $\V(G(P^*))$, and the
resulting cycle is hamiltonian.
\end{proof}

\bibliographystyle{plain}
\bibliography{references}
\end{document}